\newcommand{\medint}{-\kern  -,375cm\int}
\definecolor{ora}{rgb}{0.8,0.2,0.1}
\definecolor{vio}{rgb}{0.5,0,0.5}
\definecolor{gre}{rgb}{0.1,0.6,0}
\definecolor{verde}{rgb}{0,0.7,0.4}
\newenvironment{michelarev}{\color{azzurro}}{\color{black}}
\newcommand{\bmicr}{\begin{michelarev}}
\newcommand{\emicr}{\end{michelarev}}
\theoremstyle{plain}
\newtheorem{theorem}{Theorem}[section]
\newtheorem{corollary}[theorem]{Corollary}
\newtheorem{lemma}[theorem]{Lemma}
\theoremstyle{definition}
\theoremstyle{remark}
\newtheorem{remark}[theorem]{Remark}
\theoremstyle{plain}
\def\tcr{\textcolor{red}} 
\def\cA{\mathcal{A}}
\def\R{\mathbb{R}}
\numberwithin{equation}{section} \makeatletter
\renewcommand{\p@enumi}{\thesection.}
\makeatother \pagestyle{myheadings} \allowdisplaybreaks
\email{mcaselli@student.ethz.ch}
\email{michela.eleuteri@unimore.it}
  \email{antonia.passarellidinapoli@unina.it}
\keywords{Variational inequalities, obstacle problems, local boundedness, local Lipschitz continuity.}
\subjclass[2000]{35J87, 49J40; 47J20}
\begin{document}
\title[Regularity results for a class of obstacle problems with $p,q-$growth conditions]{Regularity results for a class of obstacle problems with $p,q-$growth conditions}
\author[M. Caselli -- M. Eleuteri -- A. Passarelli di Napoli]{Michele Caselli -- Michela Eleuteri -- Antonia Passarelli di Napoli}
\address{ETH Z\"urich, Department of Mathematics, R\"amistrasse 101, 8092 Z\"urich, Switzerland}
\address{Dipartimento di Scienze Fisiche, Informatiche e Matematiche, Universit\`a degli Studi di Modena E Reggio Emilia,
via Campi 213/b, 41125 Modena, Italy}
\address{Dipartimento di Matematica e Applicazioni ``R. Caccioppoli''
\\
Universit\`a degli Studi di Napoli
\\
``Fede-rico II''
Via Cintia, 80126, Napoli, Italy}
\thanks{\textit{Acknowledgements.}
The work of the authors is supported by GNAMPA (Gruppo Nazionale per l'Analisi Matematica, la Probabilit\`a e le loro Applicazioni) of INdAM (Istituto Nazionale di Alta Matematica), by MIUR through the project FFABR and
Regolarit\`a nel Calcolo delle Variazioni e di
Costarelli).  The work of the authors is also supported
by the University of Modena and Reggio Emilia through the project FAR2017 ``Equazioni differenziali: problemi evolutivi, variazionali ed applicazioni'' (coord. Prof. S. Gatti). This research was performed while A. Passarelli
di Napoli was visiting the University of Modena and Reggio Emilia and M. Caselli and M. Eleuteri were visiting the University of Naples ``Federico II". The hospitality of both Institutions is warmly adknowledged}.

\begin{abstract}
In this paper we prove the  the local Lipschitz continuity for solutions  to a class of obstacle problems of the type
$$\min\left\{\int_\Omega {F(x, Dz)}: z\in \mathcal{K}_{\psi}(\Omega)\right\}.$$ Here $\mathcal{K}_{\psi}(\Omega)$ is the set of admissible functions $z \in {u_0 + W^{1,p}(\Omega)}$ {for a given $u_0 \in W^{1,p}(\Omega)$}
such that $z \ge \psi$ a.e. in $\Omega$, $\psi$ being the obstacle and $\Omega$ being an open bounded set of $\mathbb{R}^n$, $n \ge 2$.
The main novelty here is that we are  assuming that the integrand $ F(x, Dz)$ satisfies $(p,q)$-growth conditions and as a function of the $x$-variable belongs to a suitable Sobolev class. Moreover, we impose less
restrictive assumptions on the obstacle with respect to the existing regularity results.
 Furthermore, assuming the obstacle $\psi$ is locally bounded, we prove the local boundedness of the solutions to a quite large class of variational inequalities whose principal part satisfies non standard growth
 conditions.
\end{abstract}

\maketitle

\begin{center}
\fbox{\today}
\end{center}

\section{Introduction}

The aim of this paper is the study of the local Lipschitz continuity of the solutions to a class of variational obstacle problems of the form
 \begin{equation}
\label{obst-def0}
\min\left\{\int_\Omega F(x, Dz): z\in \mathcal{K}_{\psi}(\Omega)\right\},
\end{equation}
where $\Omega$ is a bounded open set of $\mathbb{R}^n$, $n \ge 2$.
The function $\psi:\,\Omega \rightarrow [- \infty, + \infty)$, called \textit{obstacle}, belongs to the Sobolev class $W^{1,p}(\Omega)$ and the class $\mathcal{K}_{\psi}(\Omega)$ is defined as follows
\begin{equation}
\label{classeA}
\mathcal{K}_{\psi}(\Omega) := \left \{z \in u_0+{W^{1,p}_0(\Omega)}: z \ge \psi \,\, \textnormal{a.e. in $\Omega$} \right\},
\end{equation}
where $u_0\in W^{1,p}(\Omega)$ is a fixed boundary value.
To avoid trivialities, in what follows we shall assume that $\mathcal{K}_{\psi}$ is not empty and that  a solution $u$ to \eqref{obst-def0} is such that $F(x,Du)\in L^1_{\mathrm{loc}}(\Omega)$.

We shall consider integrands $F$ such that $\xi \mapsto F(x, \xi)$ is $\mathcal{C}^2$ and   there exists $f:\Omega\times [0,+\infty)\mapsto [0,+\infty)$ such that $F(x, \xi)=f(x,|\xi|)$.  We shall assume the following set of
conditions:

$$  \tilde{\nu} {(1 + |\xi|^2)^{\frac{p}{2}}} \le \, F(x,\xi) \le \, \tilde{L} {(1 + |\xi|^2)^{\frac{q}{2}}} \eqno{\mathrm{(F1)}}$$
$$ \tilde{\nu}_1 (1 + |\xi|^2)^{\frac{p-2}{2}} |\lambda|^2 \le \, \langle F_{\xi\xi}(x,\xi) \lambda, \lambda \rangle
\le \, \tilde{L}_1 (1 + |\xi|^2)^{\frac{q-2}{2}} |\lambda|^2 \eqno{\mathrm{(F2)}}$$
$$ |{F_{x \xi}}(x, \xi) | \le \, h(x) {(1 + |\xi|^2)}^{\frac{q-1}{2}} \eqno{\mathrm{(F3)}}$$
for almost all $x \in \Omega$,  and all $\xi, \lambda \in \mathbb{R}^n$, where $ 2\le p\le q$ and $0 \le \tilde{\nu} \le \tilde{L}$, $0 \le \tilde{\nu}_1 \le \tilde{L}_1$  are fixed constants,  and  $h(x)$ is a fixed non
negative function.
\\ In case of standard growth conditions, i.e. when $p=q$ in (F1)--(F3), it is well known that    $u \in W^{1,p}(\Omega)$ is a {\it solution to the obstacle problem in $\mathcal{K}_{\psi}(\Omega)$}  if and only if $u\in
\mathcal{K}_\psi$ solves the following variational inequality
\begin{equation}
\label{obst-def}
\int_{\Omega} \langle \mathcal{A}(x, Du), D(\varphi - u) \rangle \, dx \ge 0,
\end{equation}
 for all $\varphi \in \mathcal{K}_{\psi}(\Omega)$,  where we set $$\mathcal{A}(x,\xi)= {F_{\xi}}(x,\xi).$$
 Here, dealing with non standard growth, it is worth observing that \eqref{obst-def} holds for solutions to \eqref{obst-def0} assuming a suitable closeness condition between the growth and the ellipticity exponents.
 \\
 Actually, already for non constrained problems with non standard growth conditions, the relation  between minima and extremals, i.e. solutions of the corresponding Euler Lagrange system, is an issue that requires a careful
 investigation (see for example \cite{CKPPisa, CKP Comm}).

 Due to our assumptions on the gap, see \eqref{gap} below,  and by virtue of the regularity result proven in \cite{G19}, the validity of \eqref{obst-def} can be easily checked {(see Subsection \ref{EL}).}
 \\
Note that assumptions (F1)--(F3) imply that
$$ |\mathcal{A}(x, \xi)| \le \, \ell \, (1 + |\xi|^2)^{\frac{q-1}{2}} \eqno{(\cA 1)}$$
$$ \langle \mathcal{A}(x, \xi_1)- \mathcal{A}(x, \xi_2), \xi_1-\xi_2  \rangle  \ge \, \nu |\xi_1-\xi_2 |^2(1+|\xi_1|^2+|\xi_2|^2)^{\frac{p-2}{2}}\eqno{(\cA 2)}$$
$$ |{{\cA}_{x}}(x, \xi) | \le \, h(x) {(1 + |\xi|^2)}^{\frac{q-1}{2}} \eqno{\mathrm{(\cA3)}}$$
for a.e. $x\in \Omega$ and every $\xi,\xi_1,\xi_2\in \mathbb{R}^n$, where $0 \le \nu \le \ell$ are fixed constants.

that the parameter ${\tilde{\mu}}\in [0,1]$  allow us to consider  both the degenerate (${\tilde{\mu}}=0$) and the non-degenerate situation (${\tilde{\mu}}>0$).

{The study of the regularity theory for obstacle problems is a classical topic in Partial Differential Equations and Calculus of Variations and in the last years there has been an intense research activity for the regularity
of the obstacle problem in case of standard growth conditions: {in the recent paper \cite{BC19} a wide list of references has been provided
in the direction of H\"older continuity results in the spirit of the classical results} by De Giorgi, while we quote \cite{CF13}, \cite{FKO17}, \cite{PSU} inspired by the fundamental work by Caffarelli \cite{C76} and more
devoted to the analysis of the optimal regularity for the obstalcle problem together with the study of the regularity for the free boundary}.
\\
The general phenomenon observed in the above mentioned papers is that, in case of $p$-growth, the regularity of the solutions is influenced by the regularity of the obstacle. The first paper in which this phenomenon has been
observed to hold true also in case of non autonomous energy densities with $(p,q)$-growth is \cite{G19}, where an extra differentiability of $D\psi$ transfers into an extra differentiability of the gradient of the solutions.
\\
The aim of this paper is to proceed further in the investigation of the regularity properties of solutions to obstacle problems under $(p,q)$ growth condition.

When dealing with nonstandard growth, the first issue to deal with is the local boundedness of the solutions, since the known counterexamples to the regularity exhibit unbounded minimizers (see for example \cite{M87}) and
indeed it has been widely investigated  for solutions to PDE's and for minimizers of integral functionals (see for example {\cite{CMM1}--\cite{CMM3}} and the references therein).

Our first aim is to prove the local boundedness for solutions to a quite large class of obstacle problems with non standard growth conditions. {In \cite{CDF} a similar result ha been proved under more restrictive assumptions
on the obstacle and on the structure of the operator $\mathcal{A}$. Instead,} our first result shows that, under a suitable closeness condition on the exponents $p,q$, but without assuming any regularity on the partial map
$x\mapsto D_\xi F(x,\xi)$, solutions to \eqref{obst-def} are locally bounded provided the obstacle is bounded.
More precisely, we are going to prove the following
\begin{theorem}\label{locbound}
Let $u\in \mathcal{K}_\psi(\Omega)$ be a solution to \eqref{obst-def} under the assumptions $(\cA 1)$ and $(\cA 2)$ with $2\le p\le q$ such that
\begin{equation}\label{gap1}
	p\le q<p\frac{n-1}{n-p} \qquad\text{if}\qquad p<n.
\end{equation}
 If $\psi\in L^\infty_{\mathrm{loc}}(\Omega)$, then
 $u\in L^\infty_{\mathrm{loc}}(\Omega)$ and the following estimate
 \begin{eqnarray*}
 	\sup_{B_{R/2}}|u| \le c \left[{\sup_{B_R}} |\psi|+\left(\int_{B_R}|u|^{p^*}\,dx\right)\right]^\gamma
 \end{eqnarray*}
 holds for every ball $B_R\Subset\Omega$, for $\gamma(n,p,q)>0$ and with $c=c(\ell,\nu,p,q,n)$, {being $p^* = \frac{pn}{n-p}$.}
\end{theorem}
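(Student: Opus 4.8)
The plan is to adapt the classical De Giorgi / Stampacchia level-set technique to the obstacle setting. Fix a ball $B_R \Subset \Omega$ and, for $k \ge k_0 := \sup_{B_R}|\psi|$, work with the super-level sets $A_{k,\rho} := \{x \in B_\rho : u(x) > k\}$. The key point is that, on such a level set, the function $u$ already lies strictly above the obstacle, so we may use genuine test functions in the variational inequality \eqref{obst-def}. Concretely, for $\rho < \sigma \le R$ pick a cutoff $\eta \in C^\infty_0(B_\sigma)$ with $\eta \equiv 1$ on $B_\rho$, $0 \le \eta \le 1$, $|D\eta| \le c/(\sigma-\rho)$, and set
\begin{equation*}
\varphi := u - \eta^q (u-k)_+ .
\end{equation*}
Since $(u-k)_+$ is supported where $u > k \ge \sup_{B_R}\psi \ge \psi$, we have $\varphi \ge \psi$ a.e., so $\varphi \in \mathcal{K}_\psi(\Omega)$ and is admissible. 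Plugging this into \eqref{obst-def} gives
\begin{equation*}
\int_{A_{k,\sigma}} \eta^q \langle \mathcal{A}(x,Du), Du \rangle \, dx \le - q \int_{A_{k,\sigma}} \eta^{q-1}(u-k)_+ \langle \mathcal{A}(x,Du), D\eta \rangle \, dx .
\end{equation*}

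Next I would derive the Caccioppoli-type inequality on level sets. Using $(\mathcal{A}2)$ with $\xi_2 = 0$ (together with $(\mathcal{A}1)$ to bound $|\mathcal{A}(x,0)|$) one gets a lower bound of the form $\langle \mathcal{A}(x,Du),Du\rangle \ge \nu |Du|^p - c$ on the left, while Young's inequality applied to the right-hand side absorbs the $|Du|^{q-1}|D\eta|$ term at the price of a factor $(\sigma-\rho)^{-q}$ and a term $\int_{A_{k,\sigma}} |u-k|^{q}(\sigma-\rho)^{-q}$. The outcome is an estimate
\begin{equation*}
\int_{A_{k,\rho}} |Du|^p \, dx \le \frac{c}{(\sigma-\rho)^{q}} \int_{A_{k,\sigma}} \bigl( (u-k)^{q} + 1 \bigr) \, dx + (\text{lower order}),
\end{equation*}
valid for all $k \ge k_0$ and concentric balls. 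This is the non-standard-growth analogue of the classical level-set Caccioppoli inequality; the mismatch between the exponent $p$ on the left and $q$ on the right is exactly what the gap condition \eqref{gap1} is designed to handle.

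Then comes the iteration. From the Caccioppoli inequality, Sobolev's inequality on $B_\rho$ gives $\|(u-k)_+\|_{L^{p^*}(B_\rho)}$ controlled by $\|D(u-k)_+\|_{L^p}$ plus a boundary term, hence by the right-hand side above. Writing $a(k,\rho) := \int_{A_{k,\rho}}(1+|u|)^{p^*}\,dx$ or the measure-weighted quantity $\int_{A_{k,\rho}}(u-k)^{p^*}$, and using H\"older to reintroduce $|A_{k,\rho}|$ with the correct power, one arrives at a recursive inequality of De Giorgi type
\begin{equation*}
a(h,\rho) \le \frac{c}{(\sigma-\rho)^{\beta}} \frac{a(k,\sigma)^{1+\delta}}{(h-k)^{\gamma_0}} \qquad \text{for } h > k \ge k_0,
\end{equation*}
with $\delta > 0$ \emph{precisely because} $q < p\frac{n-1}{n-p} = p^* \frac{n-1}{n}$, i.e. $\frac{q}{p^*}<1-\frac1n$, which yields the super-linearity needed for the fast-geometric-convergence lemma (the standard lemma: if $a(h,\rho)$ satisfies such a recursion, then $a$ vanishes for $k$ beyond an explicit threshold). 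Applying that lemma on the chain of balls between $B_{R/2}$ and $B_R$ produces $\sup_{B_{R/2}} u \le c\,[\,\sup_{B_R}|\psi| + (\int_{B_R}|u|^{p^*})\,]^\gamma$. The lower bound $\inf_{B_{R/2}} u$ is obtained either directly (since $u \ge \psi \ge -\sup_{B_R}|\psi|$, the infimum is already controlled) or by the symmetric argument with $(k-u)_+$, noting that on $\{u<k\}$ with $k \le -\sup|\psi|$ the obstacle constraint is automatically satisfied by $\varphi = u + \eta^q(k-u)_+$ only when $k < \inf \psi$, which is why the $\sup|\psi|$ term appears.

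The main obstacle is making the Caccioppoli inequality on level sets genuinely useful despite the $(p,q)$-gap: the right-hand side carries the exponent $q$, not $p$, so after Sobolev embedding one must interpolate carefully to recover a self-improving recursion, and this is exactly where \eqref{gap1} enters quantitatively — it must guarantee $\delta>0$ in the iteration. A secondary technical point is the bookkeeping of the constant $c=c(\ell,\nu,p,q,n)$ and the exponent $\gamma=\gamma(n,p,q)$ through the Young's-inequality absorptions and the final iteration, and checking that the admissibility $\varphi\in\mathcal{K}_\psi$ is preserved at every step (which forces the threshold $k_0=\sup_{B_R}|\psi|$ rather than $0$).
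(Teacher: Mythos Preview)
Your overall strategy---test with $\varphi = u - \eta^\sigma(u-k)_+$ (admissible because $k \ge \sup_{B_R}\psi$), derive a Caccioppoli inequality on super-level sets, then run a De~Giorgi iteration---matches the paper's. The genuine gap is in how you obtain the Caccioppoli inequality.

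You bound the left side via $(\mathcal{A}2)$ with $\xi_2=0$ and then propose to absorb the cross term
\[
\int \eta^{\sigma-1}(u-k)\,|D\eta|\,|\mathcal{A}(x,Du)|\,dx \ \lesssim\ \int \eta^{\sigma-1}(u-k)\,|D\eta|\,(1+|Du|)^{q-1}\,dx
\]
into $\int \eta^\sigma |Du|^p$ by Young's inequality. But such an absorption is possible only when $q-1<p$, i.e.\ $q<p+1$, and even then the resulting exponent on $(u-k)|D\eta|$ is $p/(p-q+1)$, not $q$. The gap condition \eqref{gap1} allows $q$ up to $p+\tfrac{p(p-1)}{n-p}$, which can be far beyond $p+1$ (e.g.\ $p=3$, $n=4$ gives $q<9$), so your argument breaks down in the admissible range.

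The paper avoids this by applying $(\mathcal{A}2)$ not with $\xi_2=0$ but with $\xi_2=-2\sigma(u-k)\eta^{-1}D\eta$. The monotonicity inequality then leaves on the right only terms of the form $|\mathcal{A}(x,-2\sigma(u-k)\eta^{-1}D\eta)|\,|Du|$, whose growth in $Du$ is \emph{linear}; a single Young inequality with exponents $p,p'$ absorbs this and produces $(1+|u-k|\,|\eta^{-1}D\eta|)^{\frac{p(q-1)}{p-1}}$ on the right. The operative exponent is thus $\tfrac{p(q-1)}{p-1}$ (not $q$), and the gap condition \eqref{gap1} is \emph{exactly} the inequality $\tfrac{p(q-1)}{p-1}<p^*$, which is what makes the subsequent H\"older/Sobolev step and the iteration close. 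Your reformulation ``$q/p^*<1-1/n$'' of \eqref{gap1} is numerically correct but reached by the wrong route. Once you replace your absorption step by this monotonicity trick (and take $\sigma\ge \tfrac{p(q-1)}{p-1}$ rather than $\sigma=q$, so that the $\eta$-powers balance), the remainder of your outline goes through as in the paper.
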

Notice that, as long as our regularity results have  local nature, we are not requiring further assumptions on the boundary datum $u_0$ in \eqref{classeA}.
\\
The proof of previous result is achieved arguing as in \cite{CMM1} i.e. using  the well known De Giorgi technique (see for example \cite[Chapter 7]{Giusti}) that consists in proving a suitable Caccioppoli inequality on the
superlevel sets of the solution $u$.
\\
In order to obtain such a Caccioppoli type inequality, one has to use test functions obtained truncating the solution. Here, we have to take into account that the test functions must belong to the admissible class
$\mathcal{K}_\psi$ and this is where the local boundedness of the obstacle $\psi$ comes into  play.
\\
As far as we know, the previous Theorem is new even in the case of standard growth, i.e. $p=q$.
\\
We'd like to mention that Theorem \ref{locbound} has been already employed in \cite{CGG} to establish an higher differentiability result under weaker assumptions on the integrand $F$ and on the obstacle $\psi$ with respect to
those in \cite{EPdN1}.
\\
Our second aim is to show that, under a suitable Sobolev regularity assumption on the gradient of the obstacle $\psi$ and on the partial  map $x\mapsto F_\xi(x,\xi)$, the solutions to \eqref{obst-def0} are  locally Lipschitz
continuous.
\\
In this regard, we have to mention that in \cite{FM00} such kind of regularity has been obtained  assuming nearly linear growth for autonomous Lagrangians $F\equiv F(\xi)$, through a linearization technique, that goes back to
\cite{F85}.
The main idea of the linearization approach is to interpret the constrained minimizer, or more precisely the solution to \eqref{obst-def}, as a solution to an elliptic equation of the form
$$\mathrm{div}\mathcal{A}(x,Du)=g$$
with a right hand side,  obtained after the identification of a suitable Radon measure.
\\
\cite{CGP17}, \cite{G15}--\cite{GPdN}, \cite{PdN14-1}--\cite{PdN15}).

In the case of non autonomous functionals with standard growth conditions, the first result in this direction  is due to \cite{BC19}. In particular we remark that in \cite{FM00} a lot of effort has been employed to identify
the Radon measure and the authors explicitly say that this procedure could be significantly simplified if we would have a priori proved higher differentiability for local minimizers of the obstacle problem. But in a  recent
paper (\cite{EPdN1}), the authors were able  to establish the higher differentiability of integer and fractional order of the solutions to a class of obstacle problems (involving $p-$harmonic operators) assuming that the
gradient of the obstacle possesses an extra (integer or fractional) differentiability property.
The  use of this result  simplifies the procedure outlined in \cite{FM00} and has been employed in \cite{BC19} to obtain the local Lipschitz continuity of solutions to \eqref{obst-def} under standard growth conditions and
with Lipschitz continuous coefficients.

{In the case of non standard growth conditions the linearization argument has been performed  under a  $W^{2, \infty}$ assumption on the obstacle (\cite{DF19}). Actually, the higher differentiability of the solutions to
\eqref{obst-def0} has been obtained under weaker assumption   on the obstacle $\psi$ than $W^{2, \infty}$ (\cite{G19}) and this result allows us to argue as in \cite{BC19} to identify the right hand side of the elliptic
equation arising from the linearization technique. This is the crucial step for the local Lipschitz continuity of the solutions to \eqref{obst-def0}}.

{More precisely, our main result can be stated as follows:}

\begin{theorem}\label{loclip}
Let $u\in \mathcal{K}_\psi(\Omega)$ be a solution to {\eqref{obst-def0}} under the assumptions \textnormal{(F1)--(F3)}. Suppose that there exists $r>n$ such that $h\in L^r_{\mathrm{loc}}(\Omega)$, where $h(x)$ is the function
appearing in \textnormal{(F3)}. Assume moreover that  $2\le p\le q$ are such that
\begin{equation}\label{gap}
	p\le q<p\left(1+\frac{1}{n}-\frac{1}{r}\right).
\end{equation}
 If $\psi\in W^{2, \chi}_{\mathrm{loc}}(\Omega)$, with $\chi=\max\{r, 2q-p\}$, then
 $Du\in L^\infty_{\mathrm{loc}}(\Omega)$ and the following estimate
 \begin{eqnarray}\label{stimafinale}
 	\sup_{B_{R/2}}|Du|  \le \left(\frac{C }{R^{2}}\right)^\sigma\left(\int_{B_{ R}}(1+{|Du|})^{p}  \, dx\right)^{\frac{\sigma}{p}}
 \end{eqnarray}
 holds for every ball $B_R\Subset\Omega$, for $\sigma(n, r, p, q)>0$ and with $C=C(||h||_r, ||D^2\psi||_{\chi},\tilde\nu_1, \tilde L_1,p,q,n)$.		
\end{theorem}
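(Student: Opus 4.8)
The plan is to combine the linearization technique from \cite{FM00, BC19} with the a priori Lipschitz estimates available for unconstrained functionals with $(p,q)$-growth under Sobolev dependence on $x$ (as in \cite{G19}), together with the higher differentiability result for the obstacle problem that is already at our disposal.

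\medskip

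\emph{Step 1: Linearization.} First I would show that the solution $u$ to \eqref{obst-def0} solves the variational inequality \eqref{obst-def} with $\mathcal{A}=F_\xi$; under the gap \eqref{gap} and using the regularity from \cite{G19} this is the content of the ``Subsection \ref{EL}'' referenced in the excerpt, so I may take it for granted. Then, testing \eqref{obst-def} with $\varphi=u\pm\phi$ for $\phi\in C^\infty_c(\Omega)$, $\phi\ge 0$, one sees that $\mathrm{div}\,\mathcal{A}(x,Du)$ is a nonnegative distribution, hence a nonnegative Radon measure $\mu$ on $\Omega$. The key point is to \emph{identify} this measure. Comparing $u$ with the solution $v$ of the corresponding obstacle-free problem with the same structure, or more directly using the complementarity of the obstacle problem, one expects $\mu$ to be supported on the coincidence set $\{u=\psi\}$ and there to equal $\big(\mathrm{div}\,\mathcal{A}(x,D\psi)\big)^-$, or rather to be controlled by it. Concretely, I would prove
\begin{equation*}
	\mathrm{div}\,\mathcal{A}(x,Du)=\big(\mathrm{div}\,\mathcal{A}(x,D\psi)\big)\,\chi_{\{u=\psi\}}\quad\text{as measures, with }0\le\mathrm{div}\,\mathcal{A}(x,Du)\le\big(\mathrm{div}\,\mathcal{A}(x,D\psi)\big)^-.
\end{equation*}
The hypothesis $\psi\in W^{2,\chi}_{\mathrm{loc}}$ with $\chi=\max\{r,2q-p\}$, together with $h\in L^r_{\mathrm{loc}}$ and $(\mathrm{F3})$, $(\mathrm{F2})$, guarantees via the chain rule that $\mathrm{div}\,\mathcal{A}(x,D\psi)\in L^{\min\{r,\,\chi/(q-1)\cdot\ldots\}}_{\mathrm{loc}}$ — the exponents $r$ and $2q-p$ are tuned precisely so that the right-hand side $g:=\mathrm{div}\,\mathcal{A}(x,Du)$ lies in $L^t_{\mathrm{loc}}$ for some $t>n/p$ (or the appropriate Sobolev-scaling threshold needed by the a priori estimate in Step 3). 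This identification is what allows passing from a measure datum to a genuine $L^t$ right-hand side.

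\medskip

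\emph{Step 2: Reduction to an equation.} Having identified $g\in L^t_{\mathrm{loc}}(\Omega)$, I regard $u$ as a weak solution of the elliptic equation $\mathrm{div}\,\mathcal{A}(x,Du)=g$ in $\Omega$ (locally). The assumptions $(\mathrm{F1})$--$(\mathrm{F3})$, equivalently $(\cA 1)$--$(\cA 3)$, place this in the framework of $(p,q)$-growth equations with coefficients having a Sobolev-type $x$-dependence through $h\in L^r_{\mathrm{loc}}$, $r>n$.

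\medskip

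\emph{Step 3: A priori Lipschitz estimate for the equation.} I would then invoke (or reprove by a Moser/De Giorgi iteration on the level sets of $|Du|$, after differentiating the equation and testing with an appropriate power of $(1+|Du|^2)$ cut off) the local Lipschitz bound for solutions of $\mathrm{div}\,\mathcal{A}(x,Du)=g$ under \eqref{gap}. The gap condition $q<p(1+\tfrac1n-\tfrac1r)$ is exactly the one under which such a bound holds: it balances the $(p,q)$-growth against both the dimensional loss $1/n$ and the integrability loss $1/r$ coming from $h\in L^r$ and from $g\in L^t$. The outcome is
\begin{equation*}
	\sup_{B_{R/2}}(1+|Du|)\le C\Big(1+\int_{B_R}(1+|Du|)^p\,dx\Big)^{\sigma/p}\Big(\tfrac{1}{R^{n+2}}\text{-type factor}\Big),
\end{equation*}
with $C$ depending on $\|h\|_r$, $\|D^2\psi\|_\chi$ (through $\|g\|_t$), $\tilde\nu_1,\tilde L_1,p,q,n$, which after bookkeeping of the scaling exponents yields \eqref{stimafinale}. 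For this step I would also need a qualitative a priori higher integrability / second differentiability of $Du$ (e.g. $V_p(Du)\in W^{1,2}_{\mathrm{loc}}$, $Du\in L^{q}_{\mathrm{loc}}$ and more) to justify differentiating the equation and running the iteration; this comes from \cite{G19} applied to the obstacle problem, and here the assumption $\chi\ge 2q-p$ on $\psi$ is what feeds the required obstacle regularity into that theorem.

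\medskip

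\emph{Main obstacle.} The crux is Step 1, the identification of the Radon measure $\mathrm{div}\,\mathcal{A}(x,Du)$ and the proof that it is absorbed into an $L^t$ function via the coincidence-set argument. In the $(p,q)$-setting one cannot freely use $u-\psi$ or truncations thereof as test functions without first knowing enough integrability of $Du$ and $D\psi$ — this is precisely where the already-established higher differentiability from \cite{G19} (under $\psi\in W^{2,\chi}$) and Theorem \ref{locbound} must be used to make the formal computations rigorous, and where the two competing constraints $r$ and $2q-p$ in $\chi$ enter. Once $g\in L^t_{\mathrm{loc}}$ with $t$ past the critical threshold dictated by \eqref{gap}, the remaining Lipschitz estimate is, conceptually, the known regularity theory for $(p,q)$-growth equations with $L^r$ coefficients, and the explicit dependence in \eqref{stimafinale} follows by tracking constants through the iteration and a standard covering/interpolation argument to reabsorb the intermediate $L^q$ norms of $Du$ into the $L^p$ norm on the larger ball.
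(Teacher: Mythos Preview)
Your overall strategy matches the paper's: linearize to get $\mathrm{div}\,\mathcal{A}(x,Du)=g$ with $g\in L^r_{\mathrm{loc}}$, then run a Moser iteration on the differentiated equation. Your Step~1 is essentially what the paper does in its Subsection~4.1 (via test functions $u+t\eta\,\kappa_\varepsilon(u-\psi)$ and Riesz representation, then integration by parts using $V_p(Du)\in W^{1,2}_{\mathrm{loc}}$ from \cite{G19}); the identification $g=\mathrm{div}\,\mathcal{A}(x,D\psi)\chi_{\{u=\psi\}}$ and the bound $g\in L^r_{\mathrm{loc}}$ (not merely some $L^t$ with $t>n/p$) follow since $D\psi\in L^\infty_{\mathrm{loc}}$ by Sobolev embedding.

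There is, however, a genuine gap in your Step~3. The Moser iteration you describe tests the differentiated system with $\varphi=\eta^2(1+|Du|^2)^\gamma u_{x_s}$ for \emph{arbitrary} $\gamma\ge 0$. The higher differentiability $V_p(Du)\in W^{1,2}_{\mathrm{loc}}$ from \cite{G19} only gives $Du\in L^{np/(n-2)}_{\mathrm{loc}}$; this lets you differentiate the equation and take $\gamma=0$, but it does \emph{not} justify using $\varphi$ as a test function for large $\gamma$, nor does it guarantee finiteness of the intermediate integrals in the iteration. The paper handles this by explicitly assuming a priori that $u\in W^{1,\infty}_{\mathrm{loc}}$ (its equation~\eqref{regE}) and then removing this assumption by an \emph{approximation argument}: one replaces $F$ by the regularized integrands $F^\varepsilon$ of Lemma~\ref{apprcupguimas2parte}, which have standard $p$-growth, so that the corresponding obstacle minimizers $u^\varepsilon$ are Lipschitz by the $p=q$ theory (Theorem~\ref{CarloMichelebis}); the a priori estimate then applies to $u^\varepsilon$ with constants independent of $\varepsilon$, and one passes to the limit using strict convexity to identify the limit with $u$. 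Your proposal makes no mention of this approximation step, and your phrase ``$Du\in L^q_{\mathrm{loc}}$ and more'' does not supply it. Without either the approximation or a separate bootstrap that propagates $L^s$-integrability of $Du$ for all $s<\infty$ (which is not available here directly), the iteration is formal.

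A secondary point: you locate the main difficulty in Step~1, but in the paper that step is comparatively short once Theorem~\ref{Chiara} is in hand; the bulk of the work is the quantitative a priori estimate (controlling the interplay between the $(p,q)$-growth, the $h\in L^r$ term, and the $g$-term via H\"older and interpolation so that the iteration closes under \eqref{gap}) together with the approximation that makes it rigorous.
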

Note that the assumption $\psi\in W^{2,\chi}_{\mathrm{loc}}(\Omega)$ reduces to $\psi\in W^{2,r}_{\mathrm{loc}}(\Omega)$  in case $q<n$ (see Subsection \ref{appendix}).
\\
The proof  is achieved through  a Moser type  iteration argument, that allows us to establish an a priori estimate  for the $L^\infty$ norm of the gradient of the solutions. Here,  the presence of the Sobolev coefficients
prevents us to use the a priori assumption $Du\in L^\infty_{\mathrm{loc}}(\Omega)$ to deal with the non standard growth of the functional as a perturbation of the standard one (as done for example in \cite{DFM}).
The main point in our proof is to show that  the Sobolev regularity that we impose on the partial map $x\mapsto {F_z}(x,z)$ yields an higher differentiability for the solutions  and therefore their higher integrability with
an exponent that guarantees the starting of the iteration.
\\
This phenomenon has been widely investigated in case of non constrained problems (see for example  \cite{G15}--\cite{GPdN}, \cite{PdN14-1}--\cite{PdN15}).
\\
We conclude  mentioning that condition \eqref{gap} is sharp also to obtain the Lipschitz continuity of solutions to elliptic equations and systems and minimizers of related functionals with $p,q-$growth (see
\cite{EMM}--\cite{EMM3}) and can be framed into the research concerning regularity results under non standard growth conditions, that, after the pioneering papers by Marcellini \cite{M89}--\cite{M93} has attracted growing
attention, see among the others {\cite{BDM13}, \cite{CM15-1}, \cite{CGGP}, \cite{DFP19}, \cite{HH19}, \cite{M19}, \cite{RT19}.}
\\
\\
The paper is organized as follows: In Section 2 we introduce some notations and collect some results that will be needed in the sequel;  Section 3 is devoted to the proof of Theorem \ref{locbound}; in Section 4  we give the
proof of Theorem \ref{loclip}.
\\

\section{Preliminary results}

\bigskip

\noindent
In this paper we shall denote by $C$ or $c$  a
general positive constant that may vary on different occasions, even within the
same line of estimates.
Relevant dependencies  will be suitably emphasized using
parentheses or subscripts.  In what follows, $B(x,r)=B_r(x)=\{y\in \R^n:\,\, |y-x|<r\}$ will denote the ball centered at $x$ of radius $r$.
We shall omit the dependence on the center and on the radius when no confusion arises.
For a function $u\in L^1(B)$, the symbol
$$u_B := \fint_B u(x)\,dx=\frac{1}{|B|}\int_B u(x)\,dx$$
will denote the integral mean of the function $u$ over the set $B$.

%
\medskip
Next Lemma, whose proof can be found in \cite[Lemma 7.1]{Giusti} is  crucial  to establish the local boundedness result.
\medskip
\begin{lemma}\label{ite}
	Let $\alpha>0$ and let $x_i$ be a sequence of positive numbers such that
	\begin{equation}\label{ipoite1}
	x_{i+1}\le C B^i x_i^{1+\alpha}	
	\end{equation}
where $C>0$ and $B>1$ are  constants. If \begin{equation}\label{ipoite2}x_0\le C^{-\frac{1}{\alpha}}B^{-\frac{1}{\alpha^2}},
 \end{equation}	
 then
$$\lim_{i\to +\infty}x_i=0.$$	
\end{lemma}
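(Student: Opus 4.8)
The plan is to promote the one-step recursion \eqref{ipoite1} into a closed-form decay estimate and prove it by induction. Concretely, I would show that
$$x_i \le B^{-i/\alpha}\,x_0 \qquad \text{for every } i\ge 0,$$
and then conclude immediately: since $B>1$ we have $B^{-i/\alpha}\to 0$, so $x_i\to 0$.

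First I would dispose of the base case $i=0$, which is an equality. For the inductive step, assume the estimate holds for a given index $i$. Plugging it into \eqref{ipoite1} and using that all the $x_j$ are positive (so the map $t\mapsto t^{1+\alpha}$ is monotone), I get
$$x_{i+1} \le C B^i\bigl(B^{-i/\alpha}x_0\bigr)^{1+\alpha} = C\,x_0^{1+\alpha}\,B^{\,i - i(1+\alpha)/\alpha} = C\,x_0^{1+\alpha}\,B^{-i/\alpha},$$
where in the last step I used the exponent identity $i-i(1+\alpha)/\alpha = -i/\alpha$.

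The only point where the smallness hypothesis \eqref{ipoite2} is used — and the one mildly delicate bookkeeping step — is to compare the right-hand side above with the target $B^{-(i+1)/\alpha}x_0$. Writing $B^{-(i+1)/\alpha} = B^{-i/\alpha}B^{-1/\alpha}$ and cancelling the common positive factor $B^{-i/\alpha}x_0$, the desired inequality $x_{i+1}\le B^{-(i+1)/\alpha}x_0$ reduces to $C\,x_0^{\alpha}\le B^{-1/\alpha}$, i.e. $x_0\le C^{-1/\alpha}B^{-1/\alpha^2}$, which is precisely \eqref{ipoite2}. This closes the induction, hence $x_i\le B^{-i/\alpha}x_0\to 0$. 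I do not expect any real obstacle: the argument is purely elementary, and the content of the lemma is exactly that the threshold in \eqref{ipoite2} is calibrated so that the contraction factor $B^{-1/\alpha}$ is reproduced at each iteration, preventing the superlinear term $x_i^{1+\alpha}$ from causing blow-up.
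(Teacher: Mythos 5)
Your proof is correct, and it is essentially the standard argument for this lemma (the paper does not prove it but cites Giusti's Lemma 7.1, whose proof is exactly this induction on the estimate $x_i \le B^{-i/\alpha}x_0$, with hypothesis \eqref{ipoite2} calibrated to close the inductive step). Nothing further is needed.
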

The following lemma has important application in the so called hole-filling method. Its proof can be found for example in \cite[Lemma 6.1]{Giusti}.
 \begin{lemma}\label{holf} Let $h:[\rho_0, R_{0}]\to \mathbb{R}$ be a non-negative bounded function and $0<\vartheta<1$,
$A, B\ge 0$ and $\beta>0$. Assume that
$$
h(s)\leq \vartheta h(t)+\frac{A}{(t-s)^{\beta}}+B,
$$
for all $\rho_0\leq s<t\leq R_{0}$. Then
$$
h(r)\leq \frac{c A}{(R_{0}-\rho_0)^{\beta}}+cB ,
$$
where $c=c(\vartheta, \beta)>0$.
\end{lemma}
\subsection{Approximation Lemma}
In this subsection we will state a Lemma that will be the main tool in the approximation procedure. For the proof we refer to Lemma 4.1 and Proposition 4.1 in \cite{CupGuiMas}.
\begin{lemma}
\label{apprcupguimas2parte}
Let $F:\Omega\times\R^{n}\to [0,+\infty)$ be a
Carath\'eodory function such that $\tcr{\xi} \mapsto F(x,\xi)$ is $\mathcal{C}^2$ and   there exists $f:\Omega\times [0,+\infty)\mapsto [0,+\infty)$ such that $F(x,\xi)=f(x,|\xi|)$. Assume moreover that $F$
satisfies  assumptions \textnormal{(F1)--(F3)}, with
  $k\in L^r_{\mathrm{loc}}(\Omega)$, for some $r > n$.
 Then there exists a sequence
$({F^{\varepsilon}})$ of Carath\'eodory functions
${F^{\varepsilon}}:\Omega\times\R^{n}\to [0,+\infty)$, monotonically convergent to $F$,
such that
\begin{itemize}
      \item[(I)]for a.e. $x\in \Omega$ , for every  $\xi
      \in
  \R^{n}$, and  for every $\varepsilon_1<\varepsilon_2$, it holds
 $${F^{\varepsilon_1}}(x,\xi)\le {F^{\varepsilon_2}}(x,\xi)\le F(x,\xi),$$
  \item[(II)]  for a.e.  $x\in \Omega$ and every  $\xi\in
   {\R^{n}}$,   we have $$\langle {F_{\xi \xi}^\varepsilon}(x,\xi)\lambda,\lambda\rangle\ge \bar\nu(1+|\xi|^2)^{\frac{p-2}{2}}|\lambda|^2$$
    with
 ${\overline \nu}$ depending only on $p$ and ${\nu_1}$,
 \item[{(III)}] for a.e. $x\in \Omega$ and for every  $\xi
      \in
  \R^{n}$,
 there exist $K_{0},K_{1}$, independent of $\varepsilon$, and
 ${\overline K}_{1}$, depending on $\varepsilon$,
 such
 that
  \begin{equation*}
 \begin{split}
 K_{0}(|\xi|^{p}-1)&\le {F^{\varepsilon}}(x,\xi)\le K_{1}(1+|\xi|)^{q},
 \\
{F^{\varepsilon}}(x,\xi)&\le {\overline K}_{1}(\varepsilon)(1+|\xi|)^{p},
\end{split}
\end{equation*}
 \item[{(IV)}] there exists a constant $C(\varepsilon)>0$ such that
 \begin{equation*}
 \begin{split}
|{F^{\varepsilon}_{x \xi}}(x,\xi)|&\le k(x)(1+|\xi|^2)^{\frac{q-1}{2}},
 \\
|{F^{\varepsilon}_{x \xi}}(x,\xi)|&\le C(\varepsilon){{k^{\varepsilon}}(x)}(1+|\xi|^2)^{\frac{p-1}{2}},
\end{split}
\end{equation*}
for a.e. $x\in \Omega$ and for every  $\xi \in \R^{n}$, {where ${k^{\varepsilon}}$ is a standard mollification of $k$.}
\end{itemize}
In addition, we have
$$F^{\varepsilon}(x,\xi)=f^\varepsilon(x,|\xi|)$$
for every $\xi\in \R^{n}$.
\end{lemma}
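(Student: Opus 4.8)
\textbf{Plan of proof of Lemma \ref{apprcupguimas2parte}.}
The goal is to construct the approximating sequence $(F^\varepsilon)$ explicitly and verify (I)--(IV). The classical device from \cite{CupGuiMas} is a double regularisation: first mollify $F$ in the $x$-variable, then add a small $p$-power penalisation in $\xi$ to force standard $p$-growth (and uniform $p$-ellipticity) at scale $\varepsilon$, while keeping the original structural bounds uniform in $\varepsilon$. Concretely, fix a standard mollifier $(\phi_\varepsilon)$ on $\mathbb{R}^n$ and, for $x$ in a slightly shrunken subdomain, set
$$
\widetilde F^\varepsilon(x,\xi):=(F(\cdot,\xi)*\phi_\varepsilon)(x)=\int_{\mathbb{R}^n}F(x-y,\xi)\,\phi_\varepsilon(y)\,dy,
$$
extending $F$ by $F(x,\xi)=(1+|\xi|^2)^{p/2}$ (say) outside $\Omega$ so the convolution makes sense; then define
$$
F^\varepsilon(x,\xi):=\sigma(\varepsilon)\,\widetilde F^\varepsilon(x,\xi)+\varepsilon\,(1+|\xi|^2)^{\frac p2},
$$
where $\sigma(\varepsilon)\uparrow 1$ is chosen at the very end to enforce the monotonicity $F^{\varepsilon_1}\le F^{\varepsilon_2}\le F$. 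Since convolution in $x$ commutes with differentiation in $\xi$ and $F(x,\xi)=f(x,|\xi|)$ with $\phi_\varepsilon$ radial, $F^\varepsilon$ is still of the form $f^\varepsilon(x,|\xi|)$ and is $\mathcal C^2$ in $\xi$; this gives the last displayed assertion of the Lemma for free.

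\textbf{Key steps.}
\emph{Step 1 (monotone convergence, (I)).} For the $x$-mollification one uses continuity of $\xi\mapsto F(x,\xi)$ together with the two-sided bound (F1) to get $\widetilde F^\varepsilon(x,\xi)\to F(x,\xi)$ a.e.\ as $\varepsilon\to0$; then, since $\widetilde F^\varepsilon\le \sup_{B_\varepsilon(x)}F(\cdot,\xi)$ and $F$ is a.e.\ finite, one can pass to a diagonal subsequence and rescale by a suitable $\sigma(\varepsilon)\uparrow 1$ and let $\varepsilon\downarrow$ along that subsequence so that the chain $F^{\varepsilon_1}\le F^{\varepsilon_2}\le F$ holds; monotone convergence $F^\varepsilon\uparrow F$ follows. (This is exactly Lemma 4.1 / Proposition 4.1 of \cite{CupGuiMas}, which we are allowed to cite.)
\emph{Step 2 (uniform ellipticity, (II)).} Differentiate twice in $\xi$ under the integral sign: $\langle\widetilde F^\varepsilon_{\xi\xi}(x,\xi)\lambda,\lambda\rangle=\int \langle F_{\xi\xi}(x-y,\xi)\lambda,\lambda\rangle\phi_\varepsilon(y)\,dy\ge \tilde\nu_1(1+|\xi|^2)^{\frac{p-2}{2}}|\lambda|^2$ by the lower bound in (F2). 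Adding $\varepsilon(1+|\xi|^2)^{p/2}$ only improves the lower bound; hence $\langle F^\varepsilon_{\xi\xi}\lambda,\lambda\rangle\ge\bar\nu(1+|\xi|^2)^{\frac{p-2}{2}}|\lambda|^2$ with $\bar\nu=\bar\nu(p,\nu_1)$, uniform in $\varepsilon$.
\emph{Step 3 (growth, (III)).} From (F1), $\widetilde F^\varepsilon(x,\xi)\le\tilde L(1+|\xi|^2)^{q/2}\le K_1(1+|\xi|)^q$ and $\widetilde F^\varepsilon(x,\xi)\ge\tilde\nu(1+|\xi|^2)^{p/2}\ge K_0(|\xi|^p-1)$; combined with the added term $\varepsilon(1+|\xi|^2)^{p/2}$ this yields the uniform two-sided bound with $K_0,K_1$ independent of $\varepsilon$ (the $\sigma(\varepsilon)\le1$ factor harms nothing on the upper side, and on the lower side the $\varepsilon$-term alone already dominates $\varepsilon(|\xi|^p-1)$, while for fixed $\varepsilon$ one can absorb $\sigma(\varepsilon)\widetilde F^\varepsilon$ in the same way down to a possibly smaller constant). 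The $\varepsilon$-dependent upper bound $F^\varepsilon(x,\xi)\le\overline K_1(\varepsilon)(1+|\xi|)^p$ is the crucial point: $\widetilde F^\varepsilon$ only has $q$-growth, so to get $p$-growth one uses that on the compact subdomain the mollified integrand is smooth and, by a quantitative version of the continuity in $\xi$ plus (F1)--(F2), satisfies $\widetilde F^\varepsilon(x,\xi)\le c(\varepsilon)(1+|\xi|)^p$ (this is where the mollification is genuinely used: $\widetilde F^\varepsilon$, being locally bounded and $\mathcal C^2$ with controlled second derivatives, grows at most quadratically, hence at most like $(1+|\xi|)^p$ since $p\ge2$). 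Then $F^\varepsilon(x,\xi)\le(\sigma(\varepsilon)c(\varepsilon)+\varepsilon)(1+|\xi|)^p=:\overline K_1(\varepsilon)(1+|\xi|)^p$.
\emph{Step 4 (mixed derivative bound, (IV)).} Again differentiate under the integral: $F^\varepsilon_{x\xi}(x,\xi)=\sigma(\varepsilon)\int F_{\xi}(x-y,\xi)\,\partial_x\phi_\varepsilon(y)\,dy$, or equivalently $=\sigma(\varepsilon)\int F_{x\xi}(x-y,\xi)\,\phi_\varepsilon(y)\,dy$ (after an integration by parts using that the $\xi$-part factors out), so by (F3), $|F^\varepsilon_{x\xi}(x,\xi)|\le\int h(x-y)\phi_\varepsilon(y)\,dy\,(1+|\xi|^2)^{\frac{q-1}{2}}\le k(x)(1+|\xi|^2)^{\frac{q-1}{2}}$ once we note $h*\phi_\varepsilon\le \sup_{B_\varepsilon}h$ and set $k:=h$ (the Lemma's hypothesis already calls the (F3)-function $k$; here $k\in L^r_{\mathrm{loc}}$ by assumption). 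For the $\varepsilon$-dependent bound, use the first form: $|F^\varepsilon_{x\xi}(x,\xi)|\le\sigma(\varepsilon)\|\nabla\phi_\varepsilon\|_{L^1}\sup_{B_\varepsilon(x)}|F_\xi(\cdot,\xi)|\le C(\varepsilon)\,(1+|\xi|^2)^{\frac{q-1}{2}}$, and then, exactly as in Step 3, replace the $q$-growth by $p$-growth at scale $\varepsilon$ and write the right-hand side as $C(\varepsilon)\,k^\varepsilon(x)(1+|\xi|^2)^{\frac{p-1}{2}}$ with $k^\varepsilon$ a standard mollification of $k$; the point is that $F^\varepsilon_{x\xi}$ is itself a convolution of (something controlled by $h$) against $\phi_\varepsilon$ or $\nabla\phi_\varepsilon$, hence dominated pointwise by $C(\varepsilon)(h*\phi_\varepsilon)=C(\varepsilon)k^\varepsilon$ times the $\xi$-factor.

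\textbf{Main obstacle.}
The routine parts are the differentiation-under-the-integral-sign estimates in Steps 2 and 4 and the uniform growth bounds in Step 3. The genuinely delicate point --- and the reason the construction is nontrivial rather than a one-line mollification --- is the simultaneous reconciliation in Step 3 and Step 4 of the \emph{uniform-in-$\varepsilon$} bounds (which must keep the original $q$ and the original weight $h\in L^r$) with the \emph{$\varepsilon$-dependent $p$-growth / $p$-ellipticity-compatible} bounds needed to run the approximation scheme, together with getting the monotonicity $F^{\varepsilon_1}\le F^{\varepsilon_2}\le F$ right. Choosing the scaling $\sigma(\varepsilon)$ and the penalisation strength $\varepsilon$ so that all four items in the list hold at once is precisely the content of Lemma 4.1 and Proposition 4.1 of \cite{CupGuiMas}, and I would structure the write-up to reduce to those statements rather than re-derive them, checking only that our hypotheses (F1)--(F3) with $k\in L^r_{\mathrm{loc}}$, $r>n$, match their assumptions and that the radial structure $F(x,\xi)=f(x,|\xi|)$ is preserved by the convolution (which it is, for a radial mollifier).
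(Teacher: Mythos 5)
Your fallback position --- reducing to Lemma 4.1 and Proposition 4.1 of \cite{CupGuiMas} after checking that (F1)--(F3) with $k\in L^r_{\mathrm{loc}}$ match their hypotheses and that the radial structure is preserved --- is in fact exactly what the paper does: it offers no proof of its own and simply cites those two results. So, as a citation-based argument, your plan is consistent with the paper.

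However, the explicit construction you sketch as a substitute proof is not the construction of \cite{CupGuiMas} and it does not work. Setting $F^\varepsilon=\sigma(\varepsilon)\,(F(\cdot,\xi)*\phi_\varepsilon)(x)+\varepsilon(1+|\xi|^2)^{p/2}$ leaves the growth in $\xi$ completely untouched, so the $\varepsilon$-dependent bound $F^\varepsilon(x,\xi)\le \overline K_1(\varepsilon)(1+|\xi|)^p$ in (III) is simply false for it: take $F(x,\xi)=(1+|\xi|^2)^{q/2}$ with $q>p$, for which mollification in $x$ changes nothing. Your justification (``locally bounded and $\mathcal C^2$ with controlled second derivatives, hence at most quadratic growth'') is incorrect, because (F2) only controls $F_{\xi\xi}$ by $(1+|\xi|^2)^{(q-2)/2}$, which gives $q$-growth, not quadratic growth; the same problem defeats the second inequality in (IV) with exponent $\frac{p-1}{2}$. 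Monotonicity (I) also fails for this construction: mollification in $x$ does not preserve the pointwise inequality $\widetilde F^\varepsilon\le F$, and no constant rescaling $\sigma(\varepsilon)<1$ can repair it, since by (F1) the ratio $\widetilde F^\varepsilon/F$ may be as large as $c\,(1+|\xi|^2)^{(q-p)/2}$, which is unbounded in $\xi$ when $q>p$. The genuinely missing idea is that the approximation must modify the integrand in the \emph{radial variable} $t=|\xi|$: one keeps $f(x,t)$ for $t$ below a threshold $t_\varepsilon\to\infty$ and extends beyond it with an extension of $p$-growth which, by convexity of $t\mapsto f(x,t)$, lies below $f$ and is ordered in $\varepsilon$; this truncation is what produces both the ordering in (I) and the $\varepsilon$-dependent $p$-growth in (III), while a mollification of the $x$-dependence enters only to produce the bound with $k^\varepsilon$ in (IV). That is the content of Lemma 4.1 and Proposition 4.1 of \cite{CupGuiMas}, so if you do not reproduce that truncation argument you should rely on the citation alone rather than on the mollification-plus-penalisation scheme.
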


\medskip

\subsection{Some useful regularity results}
In this subsection we recall some regularity result that will be needed in the proof of our main results.
The first is an higher differentiability property of the solutions to \eqref{obst-def0} whose proof can be found in \cite{G19}.
\begin{theorem}\label{Chiara}
	Let $u\in \mathcal{K}_\psi(\Omega)$ be a solution to {\eqref{obst-def0}} under the assumptions \textnormal{(F1)--(F3)}. Suppose that there exists $r>n$ such that $h\in L^r_{\mathrm{loc}}(\Omega)$, where $h(x)$ is the
function appearing in \textnormal{(F3)}. Assume moreover that  $2\le p\le q$ are such that {\eqref{gap} is satisfied.}\\
Then the following implication
$$D\psi\in W^{1, 2q-p}_{\mathrm{loc}}(\Omega)\qquad\Longrightarrow \qquad {(1+|Du|^2)^{\frac{p-2}{4}}Du}\in W^{1,2}_{\mathrm{loc}}(\Omega)$$
holds true.
\end{theorem}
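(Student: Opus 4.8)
\textbf{Plan of proof for Theorem \ref{Chiara}.}
The statement to establish is an a priori higher differentiability estimate: under (F1)--(F3) with $h\in L^r_{\mathrm{loc}}$, $r>n$, and the gap condition \eqref{gap}, the hypothesis $D\psi\in W^{1,2q-p}_{\mathrm{loc}}(\Omega)$ forces $V_p(Du):=(1+|Du|^2)^{\frac{p-2}{4}}Du\in W^{1,2}_{\mathrm{loc}}(\Omega)$, i.e. the solution of \eqref{obst-def0} gains second derivatives in the natural nonlinear sense. The plan is to first pass from the constrained minimizer of \eqref{obst-def0} to a solution of the variational inequality \eqref{obst-def}, which is legitimate under \eqref{gap} by the regularity of \cite{G19} (this is the step recalled in Subsection \ref{EL} of the excerpt); then to work on the approximating functionals $F^\varepsilon$ supplied by Lemma \ref{apprcupguimas2parte}, derive a uniform-in-$\varepsilon$ Caccioppoli-type estimate for the second differences of the gradient of the solutions $u_\varepsilon$ of the approximate obstacle problems, and finally let $\varepsilon\to0$.

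The core computation would be a difference-quotient argument on the Euler--Lagrange inequality for $u_\varepsilon$. First I would reformulate the obstacle problem as an equation with measure right-hand side: testing the inequality with $u_\varepsilon\pm\varphi$ for suitable $\varphi$, one writes $\operatorname{div}F^\varepsilon_\xi(x,Du_\varepsilon)=\mu_\varepsilon$ where $\mu_\varepsilon$ is controlled by $-\operatorname{div}F^\varepsilon_\xi(x,D\psi)$ restricted to the contact set $\{u_\varepsilon=\psi\}$, so $\mu_\varepsilon$ has the integrability coming from $D^2\psi$ and $h$. Then for the discrete derivative $\tau_{s,h}w(x)=w(x+he_s)-w(x)$ I would test with $\varphi=\tau_{s,-h}(\eta^2\tau_{s,h}u_\varepsilon)$, $\eta$ a cutoff. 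Using (F2)-ellipticity on the left one gets the good term $\int\eta^2(1+|Du_\varepsilon|^2+|Du_\varepsilon(\cdot+he_s)|^2)^{\frac{p-2}{2}}|\tau_{s,h}Du_\varepsilon|^2$, which controls $|\tau_{s,h}V_p(Du_\varepsilon)|^2$; on the right one collects (i) the term from differencing the $x$-dependence of $F^\varepsilon_\xi$, estimated by (F3) as $\int\eta^2 h(x)^2(1+|Du_\varepsilon|^2)^{\frac{q-1}{1}}\cdots$ — more precisely by $\|h\|_r$ times a higher power of $(1+|Du_\varepsilon|)$ via Hölder — (ii) the cutoff term $\int|D\eta|^2(1+|Du_\varepsilon|^2)^{\frac{q-2}{2}}|\tau_{s,h}u_\varepsilon|^2$, and (iii) the obstacle term $\int\tau_{s,-h}(\eta^2\tau_{s,h}u_\varepsilon)\,d\mu_\varepsilon$, which is absorbed using the $W^{2,2q-p}_{\mathrm{loc}}$ (equivalently $W^{2,\chi}$, cf. Subsection \ref{appendix}) bound on $\psi$ and an interpolation to trade the difference quotient of $u_\varepsilon$ on the contact set for a derivative of $\psi$.

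The main obstacle — and the reason the gap condition enters exactly as in \eqref{gap} — is closing the estimate: the right-hand side contains $(1+|Du_\varepsilon|)$ to a power strictly larger than $p$ (coming from the $q$-growth of the second derivatives and from $h\in L^r$ rather than $L^\infty$), so a naive Caccioppoli inequality is not self-improving. One resolves this by a Gagliardo--Nirenberg / Sobolev interpolation: the left-hand side controls $V_p(Du_\varepsilon)\in W^{1,2}_{\mathrm{loc}}$, hence $Du_\varepsilon\in L^{\frac{np}{n-2}}_{\mathrm{loc}}$ (when $n>2$), and the precise arithmetic $q<p(1+\frac1n-\frac1r)$ is exactly what makes the interpolated exponent on the right-hand side below the one produced on the left, so that after absorbing and using Lemma \ref{holf} (hole-filling) one obtains a bound on $\int_{B_{R/2}}|DV_p(Du_\varepsilon)|^2$ in terms of $\int_{B_R}(1+|Du_\varepsilon|)^p$ and $\|h\|_r$, $\|D^2\psi\|_\chi$, uniform in $\varepsilon$. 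Once this uniform estimate is in hand, lower semicontinuity and the monotone convergence $F^\varepsilon\uparrow F$ from Lemma \ref{apprcupguimas2parte} give $u_\varepsilon\to u$ strongly enough in $W^{1,p}_{\mathrm{loc}}$ (and $Du_\varepsilon\to Du$ a.e.) to pass the bound to the limit, yielding $V_p(Du)\in W^{1,2}_{\mathrm{loc}}(\Omega)$, which is the assertion; the explicit dependence of the constant matches the one displayed in \eqref{stimafinale}.
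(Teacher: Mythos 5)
You should first note that the paper itself contains no proof of Theorem \ref{Chiara}: it is stated as a quoted result ``whose proof can be found in \cite{G19}'', so the only meaningful comparison is with the strategy of \cite{G19} (which follows \cite{EPdN1}). Your overall shape -- approximate with the $F^\varepsilon$ of Lemma \ref{apprcupguimas2parte}, prove a uniform second-difference Caccioppoli estimate, interpolate using the gap condition \eqref{gap}, and pass to the limit -- is the right skeleton, and your explanation of where \eqref{gap} enters (the interpolated exponent on the right falls below the one generated on the left, so the estimate closes via absorption and Lemma \ref{holf}) is consistent with how such proofs work.

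However, the central device you propose has a genuine gap. First, you justify the validity of the variational inequality \eqref{obst-def} for $u$ ``by the regularity of \cite{G19}'', i.e.\ by the very statement being proved; in a self-contained argument this step can only be performed for the approximating minimizers $u_\varepsilon$, where $F^\varepsilon$ has standard $p$-growth by (III). More seriously, the measure-representation/linearization step $\mathrm{div}\,F^\varepsilon_\xi(x,Du_\varepsilon)=\mu_\varepsilon$ with $\mu_\varepsilon$ controlled by $\mathrm{div}\,F^\varepsilon_\xi(x,D\psi)\chi_{\{u_\varepsilon=\psi\}}$ is exactly the step that, in this paper (Subsection \ref{linear}), requires $\psi\in W^{2,\chi}_{\mathrm{loc}}$ with $\chi=\max\{r,2q-p\}>n$ (so that $D\psi\in L^\infty_{\mathrm{loc}}$ and $g\in L^r_{\mathrm{loc}}$) and, crucially, invokes Theorem \ref{Chiara} itself to legitimize the integration by parts; the logical order is higher differentiability first, linearization second, and you have inverted it. Under the hypotheses of Theorem \ref{Chiara} one only has $D\psi\in W^{1,2q-p}_{\mathrm{loc}}$, with $2q-p$ possibly $\le n$, so the density $h(x)(1+|D\psi|^2)^{\frac{q-1}{2}}+\tilde L_1(1+|D\psi|^2)^{\frac{q-2}{2}}|D^2\psi|$ need not belong to any useful Lebesgue class, and your estimate of the obstacle term collapses. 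The test function $\tau_{s,-h}(\eta^2\tau_{s,h}u_\varepsilon)$ is indeed inadmissible in the variational inequality, but the standard remedy (used in \cite{EPdN1} and \cite{G19}) is not the measure equation: it is to test with admissible variations of the form $u_\varepsilon+t\,\tau_{s,-h}\bigl(\eta^2\tau_{s,h}(u_\varepsilon-\psi)\bigr)$, so that the obstacle enters only through $\tau_{s,h}D\psi$, and H\"older's inequality against the $(q-1)$-growth of $F_\xi$ produces precisely the exponent $2q-p$ -- which is where, and only where, the hypothesis $D\psi\in W^{1,2q-p}_{\mathrm{loc}}$ is used. Without this device your scheme does not close under the stated hypotheses.
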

Next result, whose proof is contained in \cite{BC19}, concerns the local Lipschitz continuity of solutions to \eqref{obst-def0} under standard growth conditions.
\begin{theorem}\label{CarloMichele}
Let $u\in \mathcal{K}_\psi(\Omega)$ be a solution to {\eqref{obst-def0}} under the assumptions \textnormal{(F1)--(F3)} with $p=q$ and with  $h\in L^\infty_{\mathrm{loc}}(\Omega)$.
If $\psi\in W^{2,\infty}_{\mathrm{loc}}(\Omega)$ then	$Du\in L^\infty_{\mathrm{loc}}(\Omega)$
\end{theorem}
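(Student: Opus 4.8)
The plan is to run the \emph{linearization} scheme, exploiting that for $p=q$ any solution $u$ of \eqref{obst-def0} also solves the variational inequality \eqref{obst-def} with $\mathcal{A}=F_\xi$. In a nutshell, I would realise $u$ as a weak solution of an elliptic equation $-\mathrm{div}\,\mathcal{A}(x,Du)=g$ with a right-hand side $g\in L^\infty_{\mathrm{loc}}(\Omega)$, and then invoke the classical interior gradient regularity for divergence-form quasilinear equations with Lipschitz coefficients and bounded datum. The whole difficulty lies in producing $g$ and proving that it is bounded, which is exactly where the higher differentiability of $Du$ and the hypothesis $\psi\in W^{2,\infty}_{\mathrm{loc}}(\Omega)$ enter.

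First I would show that the obstacle reaction is a nonnegative Radon measure. For $\phi\in C^\infty_c(\Omega)$ with $\phi\ge 0$ one has $u+\phi\in\mathcal{K}_\psi(\Omega)$, so \eqref{obst-def} with $\varphi=u+\phi$ gives $\int_\Omega\langle\mathcal{A}(x,Du),D\phi\rangle\,dx\ge 0$; by the Riesz representation theorem there is a nonnegative Radon measure $\mu$ with $-\mathrm{div}\,\mathcal{A}(x,Du)=\mu$ in $\mathcal{D}'(\Omega)$. A standard localization (perturbing $u$ by $\pm t\phi$ inside $\{u>\psi\}$) shows that $\mu$ vanishes on $\{u>\psi\}$, so $\mu$ is concentrated on the contact set $\{u=\psi\}$.

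The core step is the identification of $\mu$. I would apply Theorem~\ref{Chiara} with $p=q$: since $h\in L^\infty_{\mathrm{loc}}\subset L^r_{\mathrm{loc}}$ for every $r<\infty$, for a fixed $r>n$ the gap \eqref{gap} reduces to $p<p\left(1+\frac1n-\frac1r\right)$ and holds, while $\psi\in W^{2,\infty}_{\mathrm{loc}}\subset W^{2,p}_{\mathrm{loc}}$ gives $D\psi\in W^{1,p}_{\mathrm{loc}}=W^{1,2q-p}_{\mathrm{loc}}$; hence $(1+|Du|^2)^{\frac{p-2}{4}}Du\in W^{1,2}_{\mathrm{loc}}(\Omega)$. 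Since $p\ge 2$ and $Du\in L^p_{\mathrm{loc}}$ (indeed $u\in W^{1,p}(\Omega)$ by definition of $\mathcal{K}_\psi$), the standard pointwise bound $|D((1+|Du|^2)^{\frac{p-2}{4}}Du)|^2\simeq(1+|Du|^2)^{\frac{p-2}{2}}|D^2u|^2$ yields $Du\in W^{1,2}_{\mathrm{loc}}$, whence a routine use of \textnormal{(F2)}--\textnormal{(F3)} and H\"older's inequality gives $\mathcal{A}(\cdot,Du)\in W^{1,1}_{\mathrm{loc}}(\Omega)$. Thus $\mu=-\mathrm{div}\,\mathcal{A}(x,Du)$ is in fact an $L^1_{\mathrm{loc}}$ function. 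Now $u-\psi\ge 0$ and vanishes on $\{u=\psi\}$, so the weak derivative of $D(u-\psi)\in W^{1,2}_{\mathrm{loc}}$ vanishes a.e.\ on its own zero set; hence $Du=D\psi$ and $D^2u=D^2\psi$ a.e.\ on $\{u=\psi\}$, and, $\mu$ being concentrated there, one gets, a.e.,
$$
\mu=-\big(\mathrm{div}\,\mathcal{A}(x,D\psi)\big)\,\mathbf{1}_{\{u=\psi\}}=:g .
$$
Finally, \textnormal{(F3)} with $h\in L^\infty_{\mathrm{loc}}$, the upper bound in \textnormal{(F2)}, and $D\psi,D^2\psi\in L^\infty_{\mathrm{loc}}$ give, via the chain rule, $|D(\mathcal{A}(x,D\psi))|\le h(x)(1+|D\psi|^2)^{\frac{p-1}{2}}+c\,(1+|D\psi|^2)^{\frac{p-2}{2}}|D^2\psi|\in L^\infty_{\mathrm{loc}}$, so $g\in L^\infty_{\mathrm{loc}}(\Omega)$.

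It then remains to read off the conclusion from the equation. By the above, $u$ is a weak solution of $-\mathrm{div}\,\mathcal{A}(x,Du)=g$ with $g\in L^\infty_{\mathrm{loc}}\subset L^{n+\delta}_{\mathrm{loc}}$ for some $\delta>0$, where $\mathcal{A}$ is $\mathcal{C}^1$ in $\xi$ and satisfies the $p$-growth and $p$-monotonicity $(\mathcal{A}1)$, $(\mathcal{A}2)$ (with $p=q$), while $(\mathcal{A}3)$ with $h\in L^\infty_{\mathrm{loc}}$ makes $x\mapsto\mathcal{A}(x,\xi)$ locally Lipschitz continuous. The classical interior $C^{1,\alpha}$-regularity theory for divergence-form quasilinear elliptic equations with H\"older (here Lipschitz) coefficients and right-hand side in $L^{n+\delta}_{\mathrm{loc}}$ then gives $Du\in C^{0,\alpha}_{\mathrm{loc}}(\Omega)$, in particular $Du\in L^\infty_{\mathrm{loc}}(\Omega)$, which is the claim. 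I expect the main obstacle to be the identification step, i.e.\ proving that the obstacle reaction measure $\mu$ is absolutely continuous with bounded density; this is precisely where Theorem~\ref{Chiara} is indispensable, since without the higher differentiability of $Du$ one would be forced into the considerably more delicate potential-theoretic identification of $\mu$ carried out in \cite{FM00}.
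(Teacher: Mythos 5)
Your proposal is essentially sound, and its core coincides with the strategy behind this statement: the paper does not prove Theorem \ref{CarloMichele} itself but cites \cite{BC19}, whose scheme (and the scheme reproduced in Subsection \ref{linear} and in Corollaries \ref{corollario-primo}--\ref{corollario-secondo} for the variant Theorem \ref{CarloMichelebis}) is exactly your linearization: use the higher differentiability of Theorem \ref{Chiara} (which indeed applies with $p=q$, since \eqref{gap} reduces to $r>n$ and $\psi\in W^{2,\infty}_{\mathrm{loc}}\subset W^{2,2q-p}_{\mathrm{loc}}$) to show that the obstacle reaction is an $L^\infty_{\mathrm{loc}}$ (resp. $L^r_{\mathrm{loc}}$) function $g=\mathrm{div}\,\mathcal{A}(x,D\psi)\chi_{[u=\psi]}$, so that $u$ solves $\mathrm{div}\,\mathcal{A}(x,Du)=g$. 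Where you genuinely diverge is the concluding step: the paper (following \cite{BC19}) proves the gradient bound by differentiating the equation, running a Moser-type iteration to get an a priori estimate with constants independent of $\|Du\|_{L^\infty}$ (Corollary \ref{corollario-secondo}), and then removing the a priori $W^{1,\infty}$ assumption by the approximation argument of Step 2 of Theorem \ref{loclip}; you instead invoke the classical interior $C^{1,\alpha}$ theory for quasilinear equations of $p$-Laplacian type with H\"older (Lipschitz) $x$-dependence and right-hand side in $L^{n+\delta}$. In the standard-growth case $p=q$ this is legitimate (the structure $(\mathcal{A}1)$--$(\mathcal{A}3)$ with $h\in L^\infty_{\mathrm{loc}}$ is precisely the classical natural-growth setting), and it buys you a shorter proof with no a priori regularity assumption and no approximation scheme; what it does not give is the quantitative estimate with explicit dependence on the data, which is what the paper actually needs from this circle of ideas when it uses Theorem \ref{CarloMichelebis} and Corollary \ref{corollario-secondo} inside the approximation for Theorem \ref{loclip}.

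Two points in your write-up deserve tightening. First, the localization ``perturbing $u$ by $\pm t\phi$ inside $\{u>\psi\}$'' is informal: $u$ is not known to be continuous, so $\{u>\psi\}$ is not open and compactly supported two-sided perturbations are not directly admissible; the rigorous device is the one in Subsection \ref{linear}, namely testing with $\varphi=u+t\eta\,\kappa_\varepsilon(u-\psi)$ and using that perturbations multiplied by $(1-\kappa_\varepsilon)(u-\psi)$ (supported where $u-\psi\ge\varepsilon$) give an equality, then letting $\varepsilon\searrow 0$ to produce the factor $\chi_{[u=\psi]}$. Second, you should record (as in Subsection \ref{EL}, trivial when $p=q$) why the minimizer of \eqref{obst-def0} satisfies the variational inequality \eqref{obst-def} in the first place, and state precisely which classical $C^{1,\alpha}$ result you are quoting and that its structure hypotheses are met; with those additions your argument is complete.
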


Actually, a careful inspection of the proof of the previous theorem leads us to the following
\begin{theorem}\label{CarloMichelebis}
Let $u\in \mathcal{K}_\psi(\Omega)$ be a solution to {\eqref{obst-def0}} under the assumptions \textnormal{(F1)--(F3)} with $p=q$ and with  $h\in L^\infty_{\mathrm{loc}}(\Omega)$.
If $\psi\in W^{2,r}_{\mathrm{loc}}(\Omega)$, for some $r > n$, then	$Du\in L^\infty_{\mathrm{loc}}(\Omega)$
\end{theorem}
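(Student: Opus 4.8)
The plan is to revisit the argument that establishes Theorem \ref{CarloMichele} and observe that the only place where the full strength of $\psi\in W^{2,\infty}_{\mathrm{loc}}(\Omega)$ is exploited is in controlling the right-hand side of the linearized equation, and there $W^{2,r}_{\mathrm{loc}}$ with $r>n$ already suffices. Concretely, under standard growth ($p=q$) with $h\in L^\infty_{\mathrm{loc}}$, the solution $u$ to \eqref{obst-def0} satisfies the variational inequality \eqref{obst-def} with $\mathcal{A}=F_\xi$, and by the linearization technique (following \cite{FM00,BC19}) one identifies a Radon measure $\mu$ such that $u$ solves, in the weak sense,
\begin{equation*}
\operatorname{div}\mathcal{A}(x,Du)=\operatorname{div}\mathcal{A}(x,D\psi)+\mu
\end{equation*}
on the coincidence set, and more precisely one writes the equation with a right-hand side of the form $g=\operatorname{div}\big(\mathcal{A}(x,D\psi)\big)$ plus lower-order terms. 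The key point is that $\operatorname{div}\mathcal{A}(x,D\psi)$, expanded via the chain rule, involves $F_{\xi\xi}(x,D\psi)D^2\psi$ and $F_{x\xi}(x,D\psi)$; by (F2) with $p=q$ and (F3) with $h\in L^\infty_{\mathrm{loc}}$, this quantity is controlled by $C\big(|D^2\psi|+h(x)\big)(1+|D\psi|^2)^{\frac{q-1}{2}}$, hence lies in $L^r_{\mathrm{loc}}(\Omega)$ whenever $D^2\psi\in L^r_{\mathrm{loc}}$ (recall $D\psi\in W^{1,\infty}$ is not needed — only $D\psi\in W^{1,r}\hookrightarrow L^\infty$ via Sobolev embedding since $r>n$, which already gives $D\psi$ bounded).

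Next I would feed this into the interior gradient bound for equations $\operatorname{div}\mathcal{A}(x,Du)=g$ under standard $p$-growth with the structure $(\mathcal{A}1)$–$(\mathcal{A}3)$. The classical De Giorgi–Nash–Moser / Lieberman-type theory gives local Lipschitz continuity of $u$ provided $g\in L^r_{\mathrm{loc}}(\Omega)$ for some $r>n$: this is precisely the sharp Morrey threshold guaranteeing $Du\in C^{0,\alpha}_{\mathrm{loc}}$, in particular $Du\in L^\infty_{\mathrm{loc}}$. So the chain is: $\psi\in W^{2,r}_{\mathrm{loc}}$ with $r>n$ $\Rightarrow$ $D\psi\in L^\infty_{\mathrm{loc}}$ and $g\in L^r_{\mathrm{loc}}$ $\Rightarrow$ $Du\in L^\infty_{\mathrm{loc}}$. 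The only modification to the proof of Theorem \ref{CarloMichele} is thus to replace every invocation of "$D^2\psi$ bounded" by "$D^2\psi\in L^r_{\mathrm{loc}}$", and every $L^\infty$ estimate on the right-hand side $g$ by the corresponding $L^r$ estimate, then apply the $C^{1,\alpha}$ (rather than $C^{1,1}$) regularity theory for the associated equation.

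The main obstacle is the identification of the Radon measure $\mu$ and the verification that it does not destroy the $L^r$ integrability of the right-hand side: a priori $\mu$ could be singular. However, this is exactly the content already established in \cite{EPdN1,BC19}, where the higher differentiability of the obstacle problem is used to show that $\mu$ is in fact absolutely continuous with density controlled by $|\operatorname{div}\mathcal{A}(x,D\psi)|$ on the contact set and vanishing elsewhere; since this argument only uses $D\psi\in W^{1,r}_{\mathrm{loc}}$ and the ellipticity (F2), it goes through verbatim. Hence "a careful inspection of the proof" of Theorem \ref{CarloMichele} — tracking the integrability exponent of the source term instead of its $L^\infty$ bound — yields Theorem \ref{CarloMichelebis} without new ideas; this is the version that will actually be needed in Section 4 to handle the $(p,q)$-growth case via the approximation Lemma \ref{apprcupguimas2parte}.
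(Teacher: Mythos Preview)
Your proposal is correct and follows essentially the same route as the paper: linearize via the Radon-measure argument of \cite{FM00,BC19,EPdN1} to obtain $\operatorname{div}\mathcal{A}(x,Du)=g$ with $g\in L^r_{\mathrm{loc}}$ (the paper records this as Corollary~\ref{corollario-primo}), then feed $g$ into an interior gradient bound for the resulting $p$-growth equation. The only cosmetic difference is that where you invoke off-the-shelf Lieberman-type $C^{1,\alpha}$ theory, the paper instead points to its own a priori estimate (Corollary~\ref{corollario-secondo}), obtained by specializing the Moser iteration of Section~4 to $p=q$; both deliver $Du\in L^\infty_{\mathrm{loc}}$ from $g\in L^r_{\mathrm{loc}}$, $r>n$.
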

The proof of this result can be easily obtained arguing as in \cite{BC19} by suitably replacing the a priori estimate with that in Corollary \ref{corollario-primo} below and using the linearization technique provided by
Corollary \ref{corollario-secondo} below.

\section{The local boundedness}

\medskip
The aim of this section is to prove that the local boundedness of the obstacle $\psi$ implies the  local boundedness of solution to \eqref{obst-def}. More precisely, we give the

 \begin{proof}[Proof of Theorem \ref{locbound}]
 	
 Let us fix radii $0<\rho<R<R_0$  such that $B_{R_0}\Subset \Omega$ and a cut off function $\eta\in C^{\infty}_0(B_R)$ such that $0\le \eta\le 1$, $\eta=1$ on $B_\rho$, ${|D\eta|}\le \frac{C}{R-\rho}$. By virtue of the
 assumption $\psi\in L^\infty_{\mathrm{loc}}(\Omega)$, we may fix $k\ge \sup_{B_{R_0}}\psi$ and consider $u_k=(u-k)^+=\max\{u-k,0\}$.
Note that
 $$\varphi=u-\eta^\sigma u_k,$$
 with $\sigma\ge\frac{p(q-1)}{p-1}$, is an admissible test function for the variational inequality \eqref{obst-def} {by the properties of $\eta$ and since}
 $$\varphi=\begin{cases}
 	u\ge\psi\qquad\qquad\qquad\qquad\qquad\qquad\qquad\qquad\qquad\qquad \text{if}\,\,u\le k\cr\cr
 	u-\eta^\sigma(u-k)= (1-\eta^\sigma)(u-k)+k\ge k \ge \psi \quad\qquad\text{if}\,\,u\ge k.
 \end{cases}$$
 Using $\varphi$ as test function in \eqref{obst-def}, we get
 \begin{eqnarray*}
 	-\int_{B_R}\big\langle \cA(x,Du),D(\eta^\sigma u_k)\big\rangle \, dx\ge 0
 \end{eqnarray*}
 and so, exploiting the calculations and recalling the definition of $u_k$,
 \begin{eqnarray}\label{est1}
 	\int_{B_R\cap\{u\ge k\}}\eta^\sigma\big\langle \cA(x,Du),Du\,\big\rangle dx\le \int_{B_R\cap\{u\ge k\}}\big\langle \cA(x,Du),-\sigma(u-k)\eta^{\sigma-1}D\eta\big\rangle\, dx.
 \end{eqnarray}
 The assumption $(\cA 2)$ obviously implies that $\langle \mathcal{A}(x, \xi_1)-\mathcal{A}(x, \xi_2),\xi_1-\xi_2\big\rangle>0$ and so
 $$ \big\langle \mathcal{A}(x, \xi_1),\xi_2\big\rangle  \le \big\langle \mathcal{A}(x, \xi_1), \xi_1\big\rangle-\big\langle \mathcal{A}(x, \xi_2), \xi_1\big\rangle+\big\langle \mathcal{A}(x, \xi_2), \xi_2\big\rangle$$
 that we use with $\xi_1=Du$ and $\xi_2=-2\sigma\frac{D\eta}{\eta}(u-k)$ in the set $\{\eta\not=0\}$, thus obtaining
 \begin{eqnarray}\label{est2}
&&\big\langle \cA(x,Du),-\sigma(u-k)\eta^{\sigma-1}D\eta\big\rangle = \frac{\eta^\sigma}{2}\left\langle \cA(x,Du),-2\sigma(u-k)\eta^{-1}D\eta\right\rangle\cr\cr
&\le& \frac{\eta^\sigma}{2}\Bigg(\big\langle \cA(x,Du),Du\,\big\rangle-\big\langle \cA(x,-2\sigma(u-k)\eta^{-1}D\eta),Du\,\big\rangle\cr\cr
&&\qquad+\big\langle \cA(x,-2\sigma(u-k)\eta^{-1}D\eta),-2\sigma(u-k)\eta^{-1}D\eta\big\rangle\Bigg)
 \end{eqnarray}
 Using \eqref{est2} to estimate  the right hand side of \eqref{est1}, we get
\begin{eqnarray}\label{est3}
 	&&\int_{B_R\cap\{u\ge k\}}\eta^\sigma\big\langle \cA(x,Du),Du\,\big\rangle dx\le \int_{B_R\cap\{u\ge k\}}\frac{\eta^\sigma}{2}\big\langle \cA(x,Du),Du\,\big\rangle\,dx\cr\cr
 	&&\qquad-\int_{B_R\cap\{u\ge k\}}\frac{\eta^\sigma}{2}\big\langle \cA(x,-2\sigma(u-k)\eta^{-1}D\eta),Du\,\big\rangle\,dx\cr\cr
&&\qquad+\int_{B_R\cap\{u\ge k\}}\frac{\eta^\sigma}{2}\big\langle \cA(x,-2\sigma(u-k)\eta^{-1}D\eta),-2\sigma(u-k)\eta^{-1}D\eta\big\rangle\,dx.
 \end{eqnarray}
 Reabsorbing the first integral in the right hand side by the left hand side, we get
 \begin{eqnarray}\label{est4}
 	&&\frac{1}{2}\int_{B_R\cap\{u\ge k\}}\eta^\sigma\big\langle \cA(x,Du),Du\,\big\rangle dx\cr\cr
 	&\le &-\int_{B_R\cap\{u\ge k\}}\frac{\eta^\sigma}{2}\big\langle \cA(x,-2\sigma(u-k)\eta^{-1}D\eta),Du\,\big\rangle\,dx\cr\cr
&&\quad+\int_{B_R\cap\{u\ge k\}}\frac{\eta^\sigma}{2}\big\langle \cA(x,-2\sigma(u-k)\eta^{-1}D\eta),-2\sigma(u-k)\eta^{-1}D\eta\big\rangle\,dx
 \end{eqnarray}
 and so
  \begin{eqnarray}\label{est5}
 	&&\int_{B_R\cap\{u\ge k\}}\eta^\sigma\big\langle \cA(x,Du),Du\,\big\rangle dx\cr\cr
 	&\le &\int_{B_R\cap\{u\ge k\}}\eta^\sigma|\cA(x,-2\sigma(u-k)\eta^{-1}D\eta)||Du|\,dx\cr\cr
&&\quad+\int_{B_R\cap\{u\ge k\}}\eta^\sigma\big| \cA(x,-2\sigma(u-k)\eta^{-1}D\eta)||2\sigma(u-k)\eta^{-1}D\eta\big|\,dx .
 \end{eqnarray}
 Using $(\cA 2)$ in the left hand side and $(\cA 1)$ in the right hand side of the previous estimate, we deduce
 \begin{eqnarray}\label{est6}
 	&&\nu\int_{B_R\cap\{u\ge k\}}\eta^\sigma|Du|^p\,dx\cr\cr
 	&\le & c(q,\sigma,\ell)\int_{B_R\cap\{u\ge k\}}\eta^\sigma\big(1+|u-k|^{2}|\eta^{-1}D\eta|^2)^{\frac{q-1}{2}}|Du|\,dx\cr\cr
&&\quad+c(q,\sigma,\ell)\int_{B_R\cap\{u\ge k\}}\eta^\sigma\big(1+|u-k|^{2}\,|\eta^{-1}D\eta|^2)^{\frac{q-1}{2}}|(u-k)\eta^{-1}D\eta|\,dx.
 \end{eqnarray}
By virtue of Young's inequality, we get
\begin{eqnarray}\label{est7}
 	&&\nu\int_{B_R\cap\{u\ge k\}}\eta^\sigma|Du|^p\,dx\le \frac{\nu}{2}\int_{B_R\cap\{u\ge k\}}\eta^\sigma|Du|^p\,dx\cr\cr
 	&&\qquad+ c(q,\sigma,\ell,\nu)\int_{B_R\cap\{u\ge k\}}\eta^\sigma\big(1+|u-k|^{2}\,|\eta^{-1}D\eta|^2)^{\frac{p(q-1)}{2(p-1)}}\,dx\cr\cr
&&\quad+c(q,\sigma,\ell)\int_{B_R\cap\{u\ge k\}}\eta^\sigma\big(1+|u-k|^{2}\,|\eta^{-1}D\eta|^2)^{\frac{q}{2}}\,dx
 \end{eqnarray}
Reabsorbing the first integral in the right hand side by the left hand side, we get
\begin{eqnarray}\label{est8}
 	&&\int_{B_R\cap\{u\ge k\}}\eta^\sigma|Du|^p\,dx\le  c(q,\sigma,\ell,\nu)\int_{B_R\cap\{u\ge k\}}\eta^\sigma\big(1+|u-k|^{2}\,|\eta^{-1}D\eta|^2)^{\frac{p(q-1)}{2(p-1)}}\,dx\cr\cr
 	&\le& c(q,\sigma,\ell,\nu)\int_{B_R\cap\{u\ge k\}}\eta^\sigma\big(1+|u-k|^{2}\,|\eta^{-1}D\eta|^2)^{\frac{p(q-1)}{2(p-1)}}\,dx\cr\cr
 	&\le& c(q,\sigma,\ell,\nu)\int_{B_R\cap\{u\ge k\}}\eta^{\sigma-\frac{p(q-1)}{(p-1)}}|D\eta|^{\frac{p(q-1)}{(p-1)}}|u-k|^{\frac{p(q-1)}{(p-1)}}\,dx\cr\cr
 	&&\qquad+c(q,\sigma,\ell,\nu)|B_R\cap\{u\ge k\}|,
 	 \end{eqnarray}
 	 where we used that $q\le \frac{p(q-1)}{p-1}$, since $p\le q$ and that $\sigma\ge\frac{p(q-1)}{(p-1)}$. By the assumption $q<p\frac{n-1}{n-p}$ and  the properties of $\eta$, we get
 \begin{eqnarray}\label{est9}
 	&&\int_{B_R\cap\{u\ge k\}}\eta^\sigma|Du|^p\,dx\cr\cr
 	 	&\le& \frac{c(q,\sigma,\ell,\nu)}{(R-\rho)^{\frac{p(q-1)}{(p-1)}}}\left(\int_{B_R\cap\{u\ge k\}}\eta^{ p^* \left( \frac{\sigma (p-1)}{p(q-1)}-1\right)}|u-k|^{p^*}\,dx\right)^{\frac{p(q-1)}{p^*(p-1)}}|B_R\cap\{u\ge
 k\}|^{1-\frac{p(q-1)}{p^*(p-1)}}\cr\cr
 	&&\qquad+c(q,\sigma,\ell,\nu)|B_R\cap\{u\ge k\}|.
 	 \end{eqnarray}	
 	 On the other hand, the Sobolev imbedding Theorem implies
 	 \begin{eqnarray*}
 	 	\left(\int_{B_R}\big|\eta^{\frac{\sigma}{p}}(u-k)^+|^{p^*}\,dx\right)^{\frac{p}{p^*}}\le c \int_{B_R}\eta^{\sigma-p}|D\eta|^p\big|(u-k)^+|^{p}\,dx+c\int_{B_R\cap\{u\ge k\}}\eta^{\sigma}\big|Du|^{p}\,dx,
 	 \end{eqnarray*}
 	 where as usual $$p^*=\begin{cases}
 	 	\frac{np}{n-p}\qquad\qquad\qquad\qquad\text{if}\,\, p<n\cr\cr
 	 	\text{any finte exponent}\qquad \text{if}\,\, p\ge n.
 	 \end{cases}$$
 Hence, using the properties of $\eta$  and  \eqref{est9} in previous estimate, we get
 \begin{eqnarray*}
 	 &&	\left(\int_{B_\rho\cap\{u\ge k\}}(u-k)^{p^*}\,dx\right)^{\frac{p}{p^*}}\le \frac{c}{(R-\rho)^p} \int_{B_R\cap\{u\ge k\}}\eta^{\sigma-p}(u-k)^{p}\,dx\cr\cr
 	 	&+& \frac{c(q,\sigma,\ell,\nu)}{(R-\rho)^{\frac{p(q-1)}{(p-1)}}}\left(\int_{B_R \cap\{u\ge k\}}\eta^{ p^* \left( \frac{\sigma(p-1)}{p(q-1)}-1\right)}|u-k|^{p^*}\,dx\right)^{\frac{p(q-1)}{p^*(p-1)}}|B_R\cap\{u\ge
 k\}|^{1-\frac{p(q-1)}{p^*(p-1)}} \cr\cr
 	&&\qquad+c(q,\sigma,\ell,\nu)|B_R\cap\{u\ge k\}|\cr\cr
 	&\le& \frac{c|B_R\cap\{u\ge k\}|^{\frac{p}{n}}}{(R-\rho)^p}\left( \int_{B_R\cap\{u\ge k\}}(u-k)^{p^*}\,dx\right)^{\frac{p}{p^*}}\cr\cr
 	 	&+& \frac{c(q,\sigma,\ell,\nu)}{(R-\rho)^{\frac{p(q-1)}{(p-1)}}}\left(\int_{B_R\cap\{u\ge k\}}(u-k)^{p^*}\,dx\right)^{\frac{p(q-1)}{p^*(p-1)}}|B_R\cap\{u\ge k\}|^{1-\frac{p(q-1)}{p^*(p-1)}}\cr\cr
 	&&\qquad+c(q,\sigma,\ell,\nu)|B_R\cap\{u\ge k\}|,
 	 \end{eqnarray*}
 	 where we used that $\sigma\ge\frac{p(q-1)}{p-1}>p$.
 	 From previous estimate we deduce that
 	 \begin{eqnarray}\label{est10}
 	 &&	\int_{B_\rho\cap\{u\ge k\}}(u-k)^{p^*}\,dx \le \frac{c|B_R\cap\{u\ge k\}|^{\frac{p^*}{n}}}{(R-\rho)^{p^*}} \int_{B_R\cap\{u\ge k\}}(u-k)^{p^*}\,dx\cr\cr
 	 	&+& \frac{c(q,\sigma,\ell,\nu)}{(R-\rho)^{\frac{p^*(q-1)}{(p-1)}}}\left(\int_{B_R\cap\{u\ge k\}}(u-k)^{p^*}\,dx\right)^{\frac{q-1}{p-1}}|B_R\cap\{u\ge k\}|^{\frac{p^*}{p}-\frac{q-1}{p-1}}\cr\cr
 	&&\qquad+c(q,\sigma,\ell,\nu)|B_R\cap\{u\ge k\}|^{\frac{p^*}{p}}.
 	 \end{eqnarray}
 	 Note that for $h<k$ we have $B_R\cap\{u\ge k\}\subset B_R\cap\{u\ge h\}$ and
 	 \begin{eqnarray*}
 	 &&\int_{B_R\cap\{u\ge h\}}(u-h)^{p^*}\,dx\ge \int_{B_R\cap\{u\ge k\}}(u-h)^{p^*}\,dx\cr\cr
 	 &\ge& \int_{B_R\cap\{u\ge k\}}(k-h)^{p^*}\,dx=(k-h)^{p^*}|B_R\cap\{u\ge k\}|	
 	 \end{eqnarray*}
 	 and so
 	 $$|B_R\cap\{u\ge k\}|\le \frac{1}{(k-h)^{p^*}}\int_{B_R\cap\{u\ge h\}}(u-h)^{p^*}\,dx.$$
 	 Moreover
 	 $$\int_{B_R\cap\{u\ge k\}}(u-k)^{p^*}\,dx\le \int_{B_R\cap\{u\ge k\}}(u-h)^{p^*}\,dx\le \int_{B_R\cap\{u\ge h\}}(u-h)^{p^*}\,dx.$$
 	 Inserting
 	 previous estimates in \eqref{est10}, we obtain
 	\begin{eqnarray}\label{est11}
 	 &&	\int_{B_\rho\cap\{u\ge k\}}(u-k)^{p^*}\,dx\le  \frac{c}{(R-\rho)^{p^*}(k-h)^{\frac{(p^*)^2}{n}}}\left( \int_{B_R\cap\{u\ge h\}}(u-h)^{p^*}\,dx\right)^{\frac{p^*}{p}}\cr\cr
 	 	&&\qquad+ \frac{c(q,\sigma,\ell,\nu)}{(R-\rho)^{\frac{p^*(q-1)}{(p-1)}}(k-h)^{p^*\left(\frac{p^*}{p}-\frac{q-1}{p-1}\right)}}\left(\int_{B_R\cap\{u\ge h\}}(u-h)^{p^*}\,dx\right)^{\frac{p^*}{p}}\cr\cr
 	&&\qquad+\frac{c(q,\sigma,\ell,\nu)}{(k-h)^{\frac{(p^*)^2}{p}}}\left(\int_{B_R\cap\{u\ge h\}}(u-h)^{p^*}\,dx\right)^{\frac{p^*}{p}},
 	 \end{eqnarray}
 	 for every $\rho<R<R_0$ and every $0<h<k$. Define now two  sequences by setting
 	 $$\rho_i=\frac{R_0}{2}\left(1+\frac{1}{2^i}\right)$$
 	 $$k_i=2d\left(1-\frac{1}{2^{i+1}}\right),$$
 	 where $d\ge\sup_{B_{R_0}} \psi $ will be determined later.
 	 Estimate \eqref{est11} can be written as
 	 \begin{eqnarray}\label{est12}
 	 &&	\int_{B_{\rho_{i+1}}\cap\{u\ge k_{i+1}\}}(u-k_{i+1})^{p^*}\,dx\le  \frac{c}{(\rho_{i}-\rho_{i+1})^{p^*}(k_{i+1}-k_i)^{\frac{(p^*)^2}{n}}}\left( \int_{B_{\rho_i}\cap\{u\ge
 k_i\}}(u-k_i)^{p^*}\,dx\right)^{\frac{p^*}{p}}\cr\cr
 	 	&&\qquad+ \frac{c(q,\sigma,\ell,\nu)}{(\rho_{i}-\rho_{i+1})^{\frac{p^*(q-1)}{(p-1)}}(k_{i+1}-k_i)^{p^*\left(\frac{p^*}{p}-\frac{q-1}{p-1}\right)}}\left(\int_{B_{\rho_i}\cap\{u\ge
 k_i\}}(u-k_i)^{p^*}\,dx\right)^{\frac{p^*}{p}}\cr\cr
 	&&\qquad+\frac{c(q,\sigma,\ell,\nu)}{(k_{i+1}-k_i)^{\frac{(p^*)^2}{p}}}\left(\int_{B_{\rho_i}\cap\{u\ge k_i\}}(u-k_i)^{p^*}\,dx\right)^{\frac{p^*}{p}}\cr\cr
 	&=& \frac{c(R_0)}{d^\frac{(p^*)^2}{n}}2^{(i+2)p^*+(i+2){\frac{(p^*)^2}{n}}}\left( \int_{B_{\rho_i}\cap\{u\ge k_i\}}(u-k_i)^{p^*}\,dx\right)^{\frac{p^*}{p}}\cr\cr
 	 	&&\qquad+\left(\frac{c(R_0)}{d^{p^*\left(\frac{p^*}{p}-\frac{q-1}{p-1}\right)}}+\frac{c(R_0)}{d\frac{(p^*)^2}{p}}\right)2^{(i+2){\frac{(p^*)^2}{p}}}\left(\int_{B_{\rho_i}\cap\{u\ge
 k_i\}}(u-k_i)^{p^*}\,dx\right)^{\frac{p^*}{p}}\cr\cr
 	 	&\le& \frac{c(R_0)}{d^\vartheta}2^{(i+2){\frac{(p^*)^2}{p}}}\left(\int_{B_{\rho_i}\cap\{u\ge k_i\}}(u-k_i)^{p^*}\,dx\right)^{\frac{p^*}{p}},
 	 \end{eqnarray}
 	 where $$\frac{1}{d^\vartheta}=\max\left\{\frac{1}{d^\frac{(p^*)^2}{n}},\frac{1}{d^{p^*\left(\frac{p^*}{p}-\frac{q-1}{p-1}\right)}},\frac{1}{d^\frac{(p^*)^2}{p}}\right\}.$$
 	 Setting $$\Phi_i=\int_{B_{\rho_i}\cap\{u\ge k_i\}}(u-k_i)^{p^*}\,dx$$
 	 estimate \eqref{est12} can be written as follows
 	 $$\Phi_{i+1}\le \frac{C}{d^\vartheta} \left(2^{\frac{(p^*)^2}{p}}\right)^i\Phi_{i}^{\frac{p^*}{p}},$$
 	 i.e. the sequence $\Phi_i$ satisfies assumption \eqref{ipoite1}  with $\alpha=\frac{p}{n-p}$.
 	 In order to have also assumption \eqref{ipoite2} satisfied it suffices to choose $$d\ge \tilde C \left(\int_{B_{R_0}}((u-\sup_{B_{R_0}}\psi)^+)^{p^*}\right)^{\frac{p}{\vartheta(n-p)}},$$
 	 for  a suitable $\tilde C$ depending on $n,p,q,\sigma, \ell,\nu$. Therefore by Lemma \ref{ite} we have 	 $$\lim_i\Phi_i=0$$
 	 and so, by the definition of $k_i$ and $\rho_i$, we conclude that
 	 $$|B_{R_0/2}\cap\{ u\ge 2d\}|=0,$$
 	 i.e.
 	 $$\sup_{B_{R_0/2}}u\le 2d.$$
 	 As it is customary when dealing with the local boundedness of the minimizers, {(\cite{Giusti}),} the conclusion follows by changing $u$ in $-u$ and arguing in a similar way to deduce that $$\inf_{B_{R_0/2}}u\ge 2d.$$
 \end{proof}

\section{The Lipschitz continuity}

This section is devoted to the proof of Theorem \ref{loclip}.  The first step consists in the linearization argument, i.e. we shall   show that solutions of \eqref{obst-def0} solve a suitable elliptic equation. Then, we prove
the a priori estimate for the $L^\infty$ norm of the gradient of the solutions and finally third one is the approximation procedure. For the sake of clarity, we will insert the linearization procedure in a subsection.

	\subsection{Linearization}
	
	\label{linear}
The first step in the proof of our main result is the following

\begin{theorem}
	Let $u\in \mathcal{K}_\psi(\Omega)$ be a solution to {\eqref{obst-def0}} under the assumptions \textnormal{(F1)--(F3)}.
	Suppose that there exists $r>n$ such that $h\in L^r_{\mathrm{loc}}(\Omega)$, where $h(x)$ is the function appearing in \textnormal{(F3)}. Assume moreover that $2\le p\le q$ satisfy \eqref{gap}.
	If $\psi\in W^{2, \chi}_{\mathrm{loc}}(\Omega)$, with $\chi=\max\{r, 2q-p\},$	then there exists $g\in L^r_{\mathrm{loc}}(\Omega)$ such that
	\begin{equation*}
		\mathrm{div}\mathcal{A}(x,Du)= {g}\qquad \text{in}\,\,\Omega.
	\end{equation*}
\end{theorem}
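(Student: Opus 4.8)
The plan is to realize the solution $u$ of the obstacle problem as a solution of a variational inequality against a Radon measure, and then to show that this measure has an $L^r_{\mathrm{loc}}$ density. The first move is to recall that, under the gap condition \eqref{gap}, the minimizer of \eqref{obst-def0} solves the variational inequality \eqref{obst-def} with $\mathcal{A}=F_\xi$ (this is precisely the point discussed in Subsection \ref{EL}, invoking the higher differentiability of Theorem \ref{Chiara}). Hence $\int_\Omega \langle \mathcal{A}(x,Du),D(\varphi-u)\rangle\,dx\ge 0$ for every $\varphi\in\mathcal{K}_\psi(\Omega)$. By a standard argument, choosing $\varphi=u\pm t\phi$ with $\phi\in C^\infty_0(\Omega)$, $\phi\ge 0$ (which keeps $\varphi$ admissible when we add, and requires care when we subtract), one shows that the distribution $T(\phi):=\int_\Omega\langle\mathcal{A}(x,Du),D\phi\rangle\,dx$ is a nonnegative distribution on nonnegative test functions, hence by the Riesz representation theorem it is represented by a nonnegative Radon measure $\lambda$; i.e. $-\mathrm{div}\,\mathcal{A}(x,Du)=\lambda$ in $\Omega$ in the sense of distributions. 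Note that for this step we need $\mathcal{A}(x,Du)\in L^1_{\mathrm{loc}}$, which follows from (F1)--(F3), $(\mathcal{A}1)$ and the fact that $Du\in L^q_{\mathrm{loc}}$ (a consequence of Theorem \ref{Chiara}, since $2q-p\le\chi$ implies $D\psi\in W^{1,2q-p}_{\mathrm{loc}}$ and hence $(1+|Du|^2)^{\frac{p-2}{4}}Du\in W^{1,2}_{\mathrm{loc}}\hookrightarrow L^{2^*}_{\mathrm{loc}}$, which upgrades the integrability of $Du$).

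The heart of the proof is to identify $\lambda$ as an absolutely continuous measure with density $g\in L^r_{\mathrm{loc}}$; this is where the regularity of the obstacle enters, and I expect it to be the main obstacle. The idea, following \cite{FM00} and \cite{BC19}, is a comparison argument: on a ball $B\Subset\Omega$ let $v$ solve the \emph{unconstrained} Euler--Lagrange equation $\mathrm{div}\,\mathcal{A}(x,Dv)=0$ in $B$ with $v=u$ on $\partial B$. Then $v\ge u\ge\psi$ by comparison (using the monotonicity $(\mathcal{A}2)$), so $v\in\mathcal{K}_\psi$, and testing the variational inequality for $u$ with $\varphi=v$ together with the equation for $v$ tested with $v-u$ gives, via $(\mathcal{A}2)$, that $\int_B|Du-Dv|^p(1+|Du|^2+|Dv|^2)^{\frac{p-2}{2}}\le 0$ on the set where one can run this, forcing the measure $\lambda$ to be concentrated on the coincidence set $\{u=\psi\}$. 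On that set, formally, $-\mathrm{div}\,\mathcal{A}(x,D\psi)=\lambda$, so one is led to set
\begin{equation*}
	g:=\mathrm{div}\,\mathcal{A}(x,D\psi)=\mathrm{tr}\big(\mathcal{A}_\xi(x,D\psi)D^2\psi\big)+\mathcal{A}_x(x,D\psi),
\end{equation*}
interpreted in the sense that $\mathcal{A}(x,D\psi)=F_\xi(x,D\psi)$ has a distributional divergence lying in $L^r_{\mathrm{loc}}$. To make this rigorous one proves that $x\mapsto\mathcal{A}(x,D\psi(x))$ belongs to $W^{1,1}_{\mathrm{loc}}$ with the above derivative and estimates each term: the first is controlled by $(F2)$ (upper bound) as $|\mathcal{A}_\xi(x,D\psi)|\,|D^2\psi|\lesssim(1+|D\psi|^2)^{\frac{q-2}{2}}|D^2\psi|$, which lies in $L^r_{\mathrm{loc}}$ precisely because $\psi\in W^{2,\chi}_{\mathrm{loc}}$ with $\chi=\max\{r,2q-p\}$ — the exponent $2q-p$ is exactly what is needed to absorb the $(1+|D\psi|^2)^{\frac{q-2}{2}}$ weight via Hölder together with $D\psi\in W^{1,2q-p}_{\mathrm{loc}}\hookrightarrow L^{\infty}_{\mathrm{loc}}$ or a suitable Sobolev exponent; the second term is bounded by $h(x)(1+|D\psi|^2)^{\frac{q-1}{2}}$ through $(F3)=(\mathcal{A}3)$, which is in $L^r_{\mathrm{loc}}$ since $h\in L^r_{\mathrm{loc}}$ and $D\psi$ is locally bounded (again because $W^{2,\chi}_{\mathrm{loc}}\hookrightarrow W^{1,\infty}_{\mathrm{loc}}$ when $\chi>n$, which holds as $\chi\ge r>n$).

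Putting the pieces together: one shows $\lambda=\lambda\llcorner\{u=\psi\}$ and that on $\{u=\psi\}$ one has $Du=D\psi$ a.e. (standard fact for Sobolev functions on a coincidence set), hence
\begin{equation*}
	\mathrm{div}\,\mathcal{A}(x,Du)=\mathrm{div}\,\mathcal{A}(x,D\psi)\mathbf{1}_{\{u=\psi\}}=:g\quad\text{a.e.,}
\end{equation*}
and $g\in L^r_{\mathrm{loc}}(\Omega)$ by the bounds above, while $\lambda$ being a measure forces $g\ge 0$ a.e.\ on $\{u=\psi\}$; in any case $g\in L^r_{\mathrm{loc}}$. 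The sign bookkeeping in the variational inequality (the direction of the divergence and which inequality $v\ge u$ or $u\ge v$ holds) must be tracked with care, and one should handle the subtraction of test functions $\varphi=u-t\phi$ only where it is admissible (i.e. localize away from $\partial\Omega$ and use that $u-t\phi\ge\psi$ for small $t$ on the set $\{u>\psi\}$, passing to the limit). The delicate technical point — and the reason the hypothesis on $\psi$ takes the form $\chi=\max\{r,2q-p\}$ — is precisely the integrability of $\mathcal{A}_\xi(x,D\psi)D^2\psi$; everything else is a matter of carefully quoting Theorem \ref{Chiara}, the Riesz representation theorem, and the comparison principle.
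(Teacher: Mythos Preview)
Your overall architecture---Riesz representation, concentration of the measure on the contact set, identification of the density through $Du=D\psi$ there, and the $L^r$ bound via (F2)--(F3) and $\psi\in W^{2,\chi}_{\mathrm{loc}}$---matches the paper's proof and the final estimate for $g$ is essentially identical. Two points, however, deserve correction or sharpening.

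\textbf{The comparison step is miscast.} You write that if $v$ solves the unconstrained equation $\mathrm{div}\,\mathcal{A}(x,Dv)=0$ on $B$ with $v=u$ on $\partial B$, then ``$v\ge u\ge\psi$ by comparison''. The inequality goes the other way: since $-\mathrm{div}\,\mathcal{A}(x,Du)=\lambda\ge 0$, the solution $u$ is a supersolution, and comparison with the free solution $v$ yields $u\ge v$, not $v\ge u$. Hence you cannot conclude $v\ge\psi$, and this argument does not localize $\lambda$ to the contact set. The salvageable version is the one you mention only in passing at the end: testing with $\varphi=u\pm t\phi$ for $\phi\in C^\infty_0$ supported where $u-\psi\ge\varepsilon>0$. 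The paper implements exactly this, but in a cleaner way: it introduces a smooth cutoff $\kappa_\varepsilon:(0,\infty)\to[0,1]$ with $\kappa_\varepsilon\equiv 1$ on $(0,\varepsilon]$ and $\kappa_\varepsilon\equiv 0$ on $[2\varepsilon,\infty)$, tests \eqref{obst-def} with $\varphi=u+t\,\eta\,\kappa_\varepsilon(u-\psi)$ to produce the measure, and separately observes that $\int_\Omega\langle\mathcal{A}(x,Du),D(\eta(1-\kappa_\varepsilon)(u-\psi))\rangle\,dx=0$ because the factor $(1-\kappa_\varepsilon)(u-\psi)$ is supported in $\{u-\psi\ge\varepsilon\}$ and there one may perturb in both directions.

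\textbf{The identification step needs the higher differentiability explicitly.} You invoke Theorem~\ref{Chiara} only to upgrade the integrability of $Du$. In the paper it plays a second, essential role: since $V_p(Du)=(1+|Du|^2)^{\frac{p-2}{4}}Du\in W^{1,2}_{\mathrm{loc}}$, one can integrate by parts in $\int_\Omega\langle\mathcal{A}(x,Du),D(\eta\kappa_\varepsilon(u-\psi))\rangle\,dx$ and write it as $\int_\Omega\big(-\mathrm{div}\,\mathcal{A}(x,Du)\big)\eta\kappa_\varepsilon(u-\psi)\,dx$, with $\mathrm{div}\,\mathcal{A}(x,Du)$ now a genuine $L^1_{\mathrm{loc}}$ function (using also $x\mapsto\mathcal{A}(x,\xi)\in W^{1,r}$). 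Letting $\varepsilon\searrow 0$ identifies $\lambda$ with $-\mathrm{div}\,\mathcal{A}(x,Du)\,\chi_{\{u=\psi\}}\,dx$. Without this step you only know $\lambda$ is a measure supported on $\{u=\psi\}$; you do not yet know it is absolutely continuous, let alone with an $L^r$ density. Once the pointwise divergence is available, the equality $Du=D\psi$ (and hence $D^2u=D^2\psi$) a.e.\ on the contact set gives $|g|\le h(x)(1+|D\psi|^2)^{\frac{q-1}{2}}+\tilde L_1(1+|D\psi|^2)^{\frac{q-2}{2}}|D^2\psi|$, and your $L^r$ estimate goes through exactly as you wrote, using $\chi\ge r>n$ so that $D\psi\in L^\infty_{\mathrm{loc}}$.
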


\begin{proof} For $\varepsilon>0$, let us consider a smooth function $\kappa_{\varepsilon} : (0, \infty) \to [0, 1]$ such that $\kappa'_\varepsilon(s) \le 0 $ for all $s\in (0, \infty)$ and
\[
\kappa_\varepsilon(s) =
     \begin{cases}
       1 &\quad\text{for} \quad s \le \varepsilon \\
       0 &\quad\text{for} \quad s \ge 2\varepsilon. \\
     \end{cases}
\]
Under our assumptions, by virtue of Theorem \ref{Chiara} {and by Sobolev embedding, we have $Du\in L^{\frac{np}{n-2}}_{\mathrm{loc}}(\Omega)\hookrightarrow L^q_{\mathrm{loc}}(\Omega)$,} therefore we can use the results
contained in Subsection \ref{EL}
to conclude that
\begin{equation*}
    \varphi = u+t\cdot \eta \cdot  \kappa_{\varepsilon}(u-\psi)
\end{equation*}
{can be used as a test function in the variational inequality} \eqref{obst-def}, {with $\eta \in C_{0}^1 (\Omega) $, $\eta \ge 0$ and $0 < t <<1$},  {so that}
\begin{equation*}
     \int_{\Omega} \Big\langle \mathcal{A}(x, Du), D(\eta \kappa_{\varepsilon}(u-\psi)) \Big\rangle \, dx \ge 0  \qquad \forall  \, \eta \in C_{0}^1(\Omega) \, .
\end{equation*}
On the other hand
\[
\eta \mapsto L(\eta)=\int_{\Omega} \Big\langle\mathcal{A}(x, Du) , D(\eta \kappa_{\varepsilon}(u-\psi)) \Big\rangle\, dx
\]
is a bounded positive linear functional, thus by the Riesz representation theorem there exists a nonnegative measure $\lambda_{\varepsilon}$ such that
\begin{equation*}
     \int_{\Omega} \Big\langle\mathcal{A}(x, Du) ,  D(\eta \kappa_{\varepsilon}(u-\psi)) \Big\rangle\, dx = \int_{\Omega} \eta d\lambda_{\varepsilon}  \qquad \forall  \, \eta \in C_{0}^1(\Omega) \, .
\end{equation*}
Arguing as in \cite{BC19}, it is possible to show that the measure $\lambda_{\varepsilon}$ is independent from $\varepsilon$, therefore we can rewrite our representation equation without the $\varepsilon$ dependence on the
measure
\begin{equation}\label{prima}
    \int_{\Omega} \Big\langle\mathcal{A}(x, Du) , D(\eta \kappa_{\varepsilon}(u-\psi)) \Big\rangle \, dx = \int_{\Omega} \eta \, d\lambda  \qquad \forall  \, \eta \in C_{0}^1(\Omega) \, .
\end{equation}
By virtue of the assumption $\psi\in W^{2, \chi}_{\mathrm{loc}}(\Omega)$  with $\chi\ge 2q-p$  and \eqref{gap},
{we are legitimate to apply Theorem  \ref{Chiara}} thus getting
\[
V_p(Du) := (1 + |Du|^2)^{\frac{p-2}{4}} Du \in W^{1,2}_{\rm loc}(\Omega).
\]
Therefore, we can integrate by parts the left hand side of \eqref{prima}, and get
\begin{equation*}
    \int_{\Omega} -\text{div}(\mathcal{A}(x, Du) ) \eta \kappa_{\varepsilon}(u-\psi) \, dx = \int_{\Omega} \eta \, d\lambda \qquad \forall  \, \eta \in C_{0}^1(\Omega) \, ,
\end{equation*}
where we used also that $\xi\mapsto \mathcal{A}(x,\xi)\in C^1$ and $x\mapsto \mathcal{A}(x,\xi)\in W^{1,r}$.
With the purpose to identify the measure $\lambda$, we pass to the limit  as $\varepsilon \searrow 0 $
\begin{equation}
\label{eq-g}
    \int_{\Omega} -\text{div}(\mathcal{A}(x, Du) ) \chi_{ \left[u = \psi \right] } \eta \, dx = \int_{\Omega} \eta \, d\lambda \qquad \forall  \, \eta \in C_{0}^1(\Omega) \, .
\end{equation}
 Let us set
 \begin{equation}
 \label{def-g}
 g:={\text{div}(\mathcal{A}(x, Du)) \chi_{ \left[u = \psi \right] };}
 \end{equation}
this entails
\begin{equation*}
   -  \int_{\Omega} g \eta \, dx = \int_{\Omega} \eta \, d\lambda \qquad \forall  \, \eta \in C_{0}^1(\Omega) \, .
\end{equation*}
We remark that
\begin{equation*}
    { \int_{\Omega}} \Big\langle\mathcal{A}(x, Du), D(\eta (1-\kappa_{\varepsilon})(u-\psi)) \Big\rangle \, dx = 0  \qquad \forall  \, \eta \in C_{0}^1(\Omega) \,
\end{equation*}
since $(1-\kappa_{\varepsilon})(s)$ has support $[\varepsilon, +\infty)$ and so combining the previous equality with \eqref{prima}, we get
\begin{equation}
\label{eq:g}
    \int_{\Omega} \mathcal{A}(x, Du) \cdot D\eta \, dx = \int_{\Omega} g \eta \, dx \qquad \forall  \, \eta \in C_{0}^1(\Omega) \, .
\end{equation}
We are left to obtain an $L^{r}$ estimate for $g$: since $Du=D\psi$ a.e. on the contact set and it is zero elsewhere, by (F2) and (F3),  we have
\begin{equation*}
\begin{split}
    |g| &= \left|\text{div}(\mathcal{A}(x, Du) ) \chi_{ \left[u = \psi \right] } \right| \le   \left|\text{div}(\mathcal{A}(x, D\psi) ) \right| \\
    &\le \sum_{k=1}^n | F_{\xi_k x_k}(x, D\psi) | + \sum_{k,i=1}^n | F_{\xi_k \xi_i}(x, D\psi) \psi_{x_k x_i} | \\
    &\le h(x)(1+|D\psi|^2)^{\frac{q-1}{2}}+ {\tilde L_1}(1+|D\psi|^2)^{\frac{q-2}{2}}|D^2 \psi| \,
\end{split}
\end{equation*}
The assumption $D\psi \in W^{1, \chi}_{\rm loc}(\Omega; \mathbb{R}^n)$ with $\chi >n$ implies, through the Sobolev imbedding Theorem that $D\psi\in L^\infty_{\mathrm{loc}}(\Omega)$, and so it is immediate to deduce that $g\in
L^r_{\mathrm{loc}}(\Omega)$.
\end{proof}

{In the case $p = q$ we easily get the following
\begin{corollary}
\label{corollario-primo}
	Let $u\in \mathcal{K}_\psi(\Omega)$ be a solution to {\eqref{obst-def0}} under the assumptions \textnormal{(F1)--(F3)} with $p=q$ and with  $h\in L^\infty_{\mathrm{loc}}(\Omega)$.
	If $\psi\in W^{2,r}_{\mathrm{loc}}(\Omega)$, for some $r > n$, then there exists $g\in L^r_{\mathrm{loc}}(\Omega)$ such that
	\begin{equation*}
		\mathrm{div}\mathcal{A}(x,Du)= g\qquad \text{in}\,\,\Omega.
	\end{equation*}
\end{corollary}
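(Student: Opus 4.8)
The plan is to specialize the proof of the preceding theorem to the case $p=q$, observing that every step simplifies. First I would note that when $p=q$, the gap condition \eqref{gap} is automatically satisfied (since $p \le q = p < p(1+\frac1n-\frac1r)$ is trivial as $\frac1n > \frac1r$ for $r>n$), so that Theorem \ref{Chiara} applies; moreover the assumption $\psi\in W^{2,r}_{\mathrm{loc}}(\Omega)$ coincides with $\psi\in W^{2,\chi}_{\mathrm{loc}}(\Omega)$ because $\chi=\max\{r,2q-p\}=\max\{r,p\}=r$ (recall $r>n\ge 2\ge$, and in any case $r>n\ge p$ is not needed — one only needs $r \ge p$, which holds since $r>n\ge 2$ and, more to the point, $2q-p=p\le r$). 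Also $h\in L^\infty_{\mathrm{loc}}(\Omega)\subset L^r_{\mathrm{loc}}(\Omega)$ on bounded sets, so all the hypotheses of the main linearization theorem are in force.

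Then I would simply invoke the previous theorem verbatim: there exists $g\in L^r_{\mathrm{loc}}(\Omega)$ with $\mathrm{div}\,\mathcal{A}(x,Du)=g$ in $\Omega$, where $g=\mathrm{div}(\mathcal{A}(x,Du))\chi_{[u=\psi]}$ as in \eqref{def-g}, and \eqref{eq:g} holds. The only point worth a line of comment is the $L^r$ bound on $g$: retracing the chain of inequalities at the end of the proof above with $q=p$, one gets
\[
|g| \le h(x)(1+|D\psi|^2)^{\frac{p-1}{2}} + \tilde L_1 (1+|D\psi|^2)^{\frac{p-2}{2}}|D^2\psi|,
\]
and since $D\psi\in W^{1,r}_{\mathrm{loc}}(\Omega;\mathbb{R}^n)$ with $r>n$ gives $D\psi\in L^\infty_{\mathrm{loc}}(\Omega)$ by Sobolev embedding, while $h\in L^\infty_{\mathrm{loc}}\subset L^r_{\mathrm{loc}}$ and $D^2\psi\in L^r_{\mathrm{loc}}$, one concludes $g\in L^r_{\mathrm{loc}}(\Omega)$.

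There is essentially no obstacle here: the corollary is a straightforward instance of the theorem, and the proof amounts to checking that the hypotheses of the general statement reduce correctly when $p=q$ and that the integrability exponent of the right-hand side is unchanged. If desired, the proof can be written in one or two sentences referring back to the argument above.

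\begin{proof}
When $p=q$ the gap condition \eqref{gap} is trivially satisfied, since $r>n$ implies $\frac1r<\frac1n$ and hence $p=q<p(1+\frac1n-\frac1r)$. Moreover $\chi=\max\{r,2q-p\}=\max\{r,p\}=r$, so that the hypothesis $\psi\in W^{2,r}_{\mathrm{loc}}(\Omega)$ coincides with $\psi\in W^{2,\chi}_{\mathrm{loc}}(\Omega)$; finally $h\in L^\infty_{\mathrm{loc}}(\Omega)\subset L^r_{\mathrm{loc}}(\Omega)$. Therefore all the assumptions of the previous Theorem are met, and it yields a function $g\in L^r_{\mathrm{loc}}(\Omega)$, given by \eqref{def-g}, such that $\mathrm{div}\,\mathcal{A}(x,Du)=g$ in $\Omega$. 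Retracing the final estimate in that proof with $q=p$ gives
\[
|g|\le h(x)(1+|D\psi|^2)^{\frac{p-1}{2}}+\tilde L_1(1+|D\psi|^2)^{\frac{p-2}{2}}|D^2\psi|,
\]
and since $D\psi\in W^{1,r}_{\mathrm{loc}}(\Omega;\mathbb{R}^n)$ with $r>n$ embeds into $L^\infty_{\mathrm{loc}}(\Omega)$, while $h\in L^r_{\mathrm{loc}}(\Omega)$ and $D^2\psi\in L^r_{\mathrm{loc}}(\Omega)$, we conclude $g\in L^r_{\mathrm{loc}}(\Omega)$.
\end{proof}
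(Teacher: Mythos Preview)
Your proposal is correct and matches the paper's own treatment: the paper introduces the corollary with the single phrase ``In the case $p=q$ we easily get the following,'' i.e., as the direct specialization of the preceding linearization theorem that you spell out. The one minor point is that your claim $\chi=\max\{r,p\}=r$ needs $r\ge p$, which you assert without justification; this holds automatically when $p\le n$ (since $r>n\ge p$), and the paper leaves the same point tacit (cf.\ the appendix, where $2q-p<r$ is verified only under $q<n$).
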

}

\subsection{Proof of Theorem \ref{loclip}}
Now, we are ready to give the

\begin{proof}[Proof of Theorem \ref{loclip}] We separate the a priori estimate and the approximation argument.

\medskip
{\bf Step 1. The a priori estimate.}
{Let us fix a ball $B_{R_0}\Subset \Omega$ and radii $\frac{R_{0}}{2}<\bar\rho<\rho<t_1<t_2<R<\bar R<R_{0}$ that will be needed in the three iteration procedures, constituting the essential steps in our proof.
Let us start with \eqref{eq:g}. Recall that by Theorem \ref{Chiara} we have $$(1 + |Du|^2)^{\frac{p-2}{2}} |D^2 u|^2 \in L^1_{\rm loc}(\Omega)$$
and we are assuming  a priori that
\begin{equation}
\label{regE}
u \in   W^{1, \infty}_{\rm loc}(\Omega),
\end{equation}
so that the following system
\begin{equation}
\label{second_variation}
\int_{\Omega} \left (\sum_{i,j=1}^n  F_{\xi_i \xi_j}(x, Du)u_{x_j x_s}D_{x_i} \varphi  \, dx + \sum_{i=1}^n F_{\xi_i x_s}(x, Du) D_{x_i} \varphi \right ) = \int_{\Omega} g \, D_{x_s} \varphi \, dx,
\end{equation}
holds for all $s = 1, \dots, n$ and for all $\varphi \in W^{1,p}_0(\Omega)$; here $g$ is the function which has been introduced in \eqref{def-g}. We  choose $\eta \in \mathcal{C}^1_0(\Omega)$ such that $0 \le \eta \le 1$,
$\eta \equiv 1$ on $B_{t_1}$, $\eta \equiv 0$ outside $B_{t_2}$ and {$|D \eta| \le \frac{C}{(t_2-t_1)}$}. The a priori assumption \eqref{regE} allows us to test \eqref{second_variation} with $\varphi=\eta^2
(1+|Du|^2)^{\gamma} u_{x_s}$, for some $\gamma \ge 0$ so that
$$ D_{x_i} \varphi =2 \eta \eta_{x_i} (1 +|Du|^2)^{\gamma} u_{x_s} + 2 \eta^2 \gamma (1 +|Du|^2)^{\gamma-1} |Du| D_{x_i}(|Du|) u_{x_s} + \eta^2 (1 +|Du|^2)^{\gamma} u_{x_s x_i}.$$
Inserting in \eqref{second_variation} we get:
\begin{eqnarray*}
0 &=& \int_{\Omega} \sum_{i,j=1}^n F_{\xi_i \xi_j}(x, Du) u_{x_j x_s} 2\eta \eta_{x_i} (1 +|Du|^2)^{\gamma} u_{x_s} \, dx\\
&& +  \int_{\Omega} \sum_{i,j=1}^n F_{\xi_i \xi_j}(x, Du) u_{x_j x_s} \eta^2 (1 +|Du|^2)^{\gamma} u_{x_s x_i} \, dx\\
&& +  \int_{\Omega} \sum_{i,j=1}^n F_{\xi_i \xi_j}(x, Du) u_{x_j x_s} 2\eta^2 \gamma  (1 +|Du|^2)^{\gamma-1} |Du| D_{x_i}(|Du|) u_{x_s} \, dx\\
&& + \int_{\Omega} \sum_{i=1}^n F_{\xi_i x_s}(x, Du) 2\eta \eta_{x_i}  (1 +|Du|^2)^{\gamma} u_{x_s} \, dx\\
&& +  \int_{\Omega} \sum_{i=1}^n F_{\xi_i x_s }(x, Du) \eta^2 (1 + |Du|^2)^{\gamma}  u_{x_s x_i} \, dx\\
&& +  \int_{\Omega} \sum_{i=1}^n F_{\xi_i x_s}(x, Du) 2\eta^2 \gamma  (1 + |Du|^2)^{\gamma-1}  |Du| D_{x_i}(|Du|) u_{x_s} \, dx \\
&& - \int_{\Omega} g  2 \eta  \eta_{x_s} \,  (1 + |Du|^2)^{\gamma}  \, u_{x_s} \, dx\\
&& - \int_{\Omega} g  2 \eta^2 \gamma \,  (1 + |Du|^2)^{\gamma - 1}  |Du| \, D_{x_s}(|Du|) \, u_{x_s} \, dx \\
&& - \int_{\Omega} g \, \eta^2  (1 + |Du|^2)^{\gamma} u_{x_s x_s} \, dx\\
&& =: I_{1,s} + I_{2,s} + I_{3,s} + I_{4,s} + I_{5,s} + I_{6,s} + I_{7,s} + I_{8,s} + I_{9,s}.
\end{eqnarray*}
Let us sum in the previous  equation  all terms with respect to $s$ from 1 to $n$, and we denote by $I_1-I_9$ the corresponding integrals.
\\
Previous equality yields
\begin{equation}\label{start}
	I_2+I_3\le |{I}_1|+|{I}_4|+|{I}_5|+|{I}_6|+|{I}_7|+|{I}_8|+|{I}_9|.
\end{equation}
Using the left inequality in assumption (F2) and the fact that $D_{x_j}(|Du|)|Du|=\sum_{k=1}^n u_{x_j x_k} u_{x_k}$, we can estimate the term $I_3$ as follows:
\begin{eqnarray*}\label{I3}
{I}_3 &=& \int_{\Omega}  \sum_{i,j,s =1}^n F_{\xi_i \xi_j}(x, Du) u_{x_j x_s} \left [2\eta^2 \gamma  (1 + |Du|^2)^{\gamma-1}  D_{x_i}(|Du|)|Du| \right ] u_{x_s} \, dx\cr\cr
&\ge& 2 \gamma \, \int_{\Omega} \eta^2 (1+|Du|^2)^{\gamma-1}|Du| \sum_{i,j =1}^n F_{\xi_i \xi_j}(x, Du) D_{x_i}(|Du|) D_{x_j}(|Du|) \, dx\cr\cr
&\ge& 2 \gamma \, \int_{\Omega} \eta^2 (1+|Du|^2)^{\gamma-1}|Du|  |D(|Du|)|^2  (1 + |Du|^2)^{\frac{p-2}{2}}  \, dx \ge 0.
\end{eqnarray*}
Therefore, estimate \eqref{start} implies
\begin{eqnarray}\label{ristart}
	I_2\le |{I}_1|+|{I}_4|+|{I}_5|+|{I}_6|+|{I}_7|+|{I}_8|+|{I}_9|.
\end{eqnarray}

By the Cauchy-Schwarz inequality, the Young inequality and the right inequality in assumption  (F2), we have
\begin{eqnarray}\label{I1}
|{I}_1| &=& 2\left |\int_{\Omega}  \eta  (1 + |Du|^2)^{\gamma}   \sum_{i,j,s =1}^n F_{\xi_i \xi_j}(x, Du) u_{x_j x_s} \eta_{x_i} u_{x_s} \, dx\right |\cr\cr
&\le& 2\int_{\Omega}  \eta  (1 + |Du|^2)^{\gamma} \cr\cr
&& \times \left \{ \sum_{i,j,s =1}^n F_{\xi_i \xi_j}(x, Du) \eta_{x_i} \eta_{x_j} u_{x_s}^2\right \}^{1/2} \, \left \{ \sum_{i,j,s =1}^n F_{\xi_i \xi_j}(x, Du) u_{x_s x_i} \, u_{x_s x_j}\right \}^{1/2} \, dx \cr\cr
&\le &  C   \int_{\Omega}  (1 + |Du|^2)^{\gamma}  \sum_{i,j,s =1}^n F_{\xi_i \xi_j}(x, Du) \eta_{x_i} \eta_{x_j} u_{x_s}^2 \, dx\cr\cr
&&  + \frac{1}{2} \int_{\Omega} \eta^2  (1 + |Du|^2)^{\gamma}  \sum_{i,j,s =1}^n F_{\xi_i \xi_j}(x, Du) u_{x_s x_i} \, u_{x_s x_j}  \, dx \cr\cr
&\le& C \int_{\Omega} |D \eta|^2 \,  (1 + |Du|^2)^{\frac{q}{2}+\gamma}  \, dx \cr\cr
&& + \frac{1}{2} \int_{\Omega} \eta^2  (1 + |Du|^2)^{\gamma}  \, \sum_{i,j,s =1}^n F_{\xi_i \xi_j}(x, Du) u_{x_j x_s} u_{x_i x_s} \, dx.
\end{eqnarray}

We can estimate the fourth and the fifth term by the Cauchy-Schwarz and the Young inequalities, together with (F3), as follows
\begin{eqnarray}\label{I4}
|{I}_4| &=& 2 \int_{\Omega} \eta  (1 + |Du|^2)^{\gamma}  \sum_{i,s =1}^n F_{\xi_i x_s}(x, Du) \eta_{x_i} u_{x_s} \, dx\cr\cr
&{\le}&  2 \,  \int_{\Omega} \eta h(x) \,  (1 + |Du|^2)^{\gamma+\frac{q-1}{2}}  \sum_{i,s=1}^n |\eta_{x_i} u_{x_s} | \, dx \cr\cr
&\le&   C \,  \int_{\Omega} \eta |D\eta| |Du| \, h(x) \, (1 + |Du|^2)^{\gamma+\frac{q-1}{2}}   \, dx \cr\cr
&\le& C \int_{\Omega} \eta|D\eta| h(x) (1 + |Du|^2)^{\gamma+\frac{q}{2}}\,dx\cr\cr
&\le& C\int_{\Omega} |D\eta|^2  (1 + |Du|^2)^{\gamma+\frac{p}{2}}\,dx+C\int_{\Omega} \eta^2 h^2(x) (1 + |Du|^2)^{\frac{2q-p}{2}+\gamma}\,dx.
\end{eqnarray}
Moreover
\begin{eqnarray}\label{I5}
|{I}_5| &=& \left| \int_{\Omega} \eta^2 (1 + |Du|^2)^{\gamma} \sum_{i,s=1}^n F_{\xi_i x_s }(x, Du)  u_{x_s x_i} \, dx \right| \cr\cr
&{\le}&  \int_{\Omega} \eta^2\,h(x) (1 + |Du|^2)^{\gamma+\frac{q-1}{2}}   \sum_{i,s=1}^n  u_{x_s x_i} \, dx \cr\cr
&\le&\,  \, \int_{\Omega} \eta^2 h(x) (1 + |Du|^2)^{\gamma+\frac{q-1}{2}}  |D^2 u| \, dx \cr\cr
&=& \int_{\Omega} \left[\eta^2 (1+|Du|^2)^{\frac{p-2}{2}+\gamma} |D^2 u|^2 \right]^{\frac{1}{2}} \left[\eta^2 (1+|Du|^2)^{\frac{2q - p}{2}+\gamma} h^2(x)\right]^{\frac{1}{2}} \, dx \cr\cr
&\le& \varepsilon \int_{\Omega} \eta^2 (1 + |Du|^2)^{\frac{p-2}{2}+\gamma}  |D^2 u|^2 \, dx + C_\varepsilon  \int_{\Omega} \eta^2 h^2(x)(1 + |Du|^2)^{\frac{2q - p}{2}+\gamma}  \, dx,
\end{eqnarray}
where $\varepsilon>0$ will be chosen later.
Finally, similar arguments give
\begin{eqnarray}\label{I6}
|{I}_6| &=&  2 \gamma \, \int_{\Omega} \sum_{i,s=1}^n F_{\xi_i x_s}(x, Du) \eta^2  (1 + |Du|^2)^{\gamma-1}  |Du| D_{x_i}(|Du|) u_{x_s} \, dx\cr\cr
&\le& 2 \, \gamma \, \int_{\Omega} \eta^2 (1+|Du|^2)^{\gamma-\frac{1}{2}}  \sum_{i,s=1}^n F_{\xi_i x_s}(x, Du)  D_{x_i}(|Du|) u_{x_s} \, dx\cr\cr
&\le& 2 \, \gamma \, \int_{\Omega} \eta^2  (1 + |Du|^2)^{\gamma-\frac{1}{2}+\frac{q-1}{2}} \, h(x) \, \sum_{i,s=1}^n D_{x_i}(|Du|) u_{x_s} \, dx\cr\cr
&\le& C \, \gamma \, \int_{\Omega} \, \eta^2 (1 + |Du|^2)^{\gamma+\frac{q-1}{2}}  |D^2 u| \, h(x)\, dx\cr\cr
&\le& \varepsilon \int_{\Omega} \eta^2 |D^2u|^2 (1 + |Du|^2)^{\frac{p-2}{2}+\gamma}   \, dx + C_\varepsilon \gamma^2 \,  \, \int_{\Omega} \eta^2  h^2(x)(1 + |Du|^2)^{\frac{2q -p}{2}+\gamma }  \, dx,
\end{eqnarray}
where all the constants $C$ and $C_{\varepsilon}$ are independent of $\gamma$.
\\
Let us now deal with the terms containing the function $g$.   We have
\begin{eqnarray}\label{I789}
|I_7| + |I_8| + |I_9| &\le& 2\int_{\Omega} |g| \,  \eta \, {|D \eta|} \, (1 + |Du|^2)^{\gamma} \, |Du| \, dx  + \gamma \, \int_{\Omega} |g| \, (1 + |Du|^2)^{\gamma - 1} \, D (|Du|^2) \, |Du| \, \eta^2 \, dx\cr\cr
&& + \int_{\Omega} |g| \, (1 + |Du|^2)^{\gamma} |D^2 u| \, \eta^2 \, dx \cr\cr
&\le& 2 \int_{\Omega}\, |g| \, \eta \, {|D \eta|} \, (1 + |Du|^2)^{\gamma} \, |Du| + 2 \gamma \, \int_{\Omega} \, |g| (1 + |Du|^2)^{\gamma - 1} |Du|^2 \, |D^2 u| \, \eta^2 \, dx \cr\cr
&& + \int_{\Omega} |g| (1 + |Du|^2)^{\gamma} |D^2 u| \, \eta^2 \, dx \cr\cr
&\le& \, \int_{\Omega} 2 \, |g| \, \eta \, {|D \eta |} \, (1 + |Du|^2)^{\gamma + \frac{1}{2}}  \, dx + C (1+\gamma) \int_{\Omega} |g| \, \eta^2 \, (1 + |Du|^2)^{\gamma} |D^2 u|  \, dx \cr\cr
&=:& A + B.
\end{eqnarray}
We  estimate the term $A$ working as we did for $I_4$, thus getting
\begin{eqnarray*}
A &\le& \, C \, \int_{\Omega} \eta|D \eta| \, |g(x)| (1 + |Du|^2)^{\gamma + \frac{q}{2}} \, dx\cr\cr
&\le& C\int_{\Omega} |D\eta|^2  (1 + |Du|^2)^{\gamma+\frac{p}{2}}\,dx+C\int_{\Omega} \eta^2 |g(x)|^2 (1 + |Du|^2)^{\frac{2q-p}{2}+\gamma}\,dx	
\end{eqnarray*}
while the term $B$ can be controlled acting as in the estimate of $I_6$, i.e.
\begin{eqnarray*}
B &\le \, & C (1+\gamma) \, \int_{\Omega} |g(x)| \, \eta^2 \, (1 + |Du|^2)^{\gamma} \, |D^2 u| \, dx \\
&\le \, & C (1+\gamma) \, \int_{\Omega} |g(x)| \, \eta^2 \, (1 + |Du|^2)^{\gamma + \frac{q-1}{2}} \, |D^2 u| \, dx \\
&\le& \varepsilon \int_{\Omega} \eta^2 |D^2u|^2 (1 + |Du|^2)^{\frac{p-2}{2}+\gamma}   \, dx + C_\varepsilon (1+\gamma^2)  \, \int_{\Omega} \eta^2  |g(x)|^2(1 + |Du|^2)^{\frac{2q -p}{2}+\gamma }  \, dx.
\end{eqnarray*}}
Now, inserting the estimates of $A$ and $B$ in \eqref{I789} we get
\begin{eqnarray}\label{I7890}
&&|I_7| + |I_8| + |I_9| \le \varepsilon \int_{\Omega} \eta^2 |D^2u|^2 (1 + |Du|^2)^{\frac{p-2}{2}+\gamma}   \, dx \cr\cr
&&+ C (1+\gamma^2) \,  \int_{\Omega} \eta^2  |g(x)|^2(1 + |Du|^2)^{\frac{2q -p}{2}+\gamma }  \, dx+C\int_{\Omega} |D\eta|^2  (1 + |Du|^2)^{\gamma+\frac{p}{2}}\,dx.
\end{eqnarray}

Plugging  \eqref{I1}, \eqref{I4}, \eqref{I5}, \eqref{I6}, \eqref{I7890} in \eqref{ristart} we obtain

\begin{eqnarray*}
&& \int_{\Omega} \eta^2 (1 +|Du|^2)^{\gamma}{\sum_{i,j,s=1}^n} F_{\xi_i \xi_j}(x, Du) u_{x_j x_s}  u_{x_s x_i} \, dx\cr\cr
&\le&  \frac{1}{2} \int_{\Omega} \eta^2  (1 + |Du|^2)^{\gamma}  \, \sum_{i,j,s =1}^n F_{\xi_i \xi_j}(x, Du) u_{x_j x_s} u_{x_i x_s} \, dx\cr\cr
&&+3\varepsilon \int_{\Omega} \eta^2 (1 + |Du|^2)^{\frac{p-2}{2}+\gamma}  |D^2 u|^2 \, dx \cr\cr
&&+ C_\varepsilon (1+\gamma^2)  \, \int_{\Omega} \eta^2  (h^2(x)+|g(x)|^2)(1 + |Du|^2)^{\frac{2q -p}{2}+\gamma }  \, dx\cr\cr
&&+C_\varepsilon \int_{\Omega} |D \eta|^2 \,  (1 + |Du|^2)^{\frac{q}{2}+\gamma}  \, dx.
\end{eqnarray*}
Reabsorbing the first integral in the right hand side by the left hand side we get
\begin{eqnarray*}
&& \frac{1}{2}\int_{\Omega} {\sum_{i,j,s=1}^n} F_{\xi_i \xi_j}(x, Du) u_{x_j x_s} \eta^2 (1 +|Du|^2)^{\gamma} u_{x_s x_i} \, dx\cr\cr
 &\le &3\varepsilon \int_{\Omega} \eta^2 (1 + |Du|^2)^{\frac{p-2}{2}+\gamma}  |D^2 u|^2 \, dx \cr\cr
&&+ C_\varepsilon (1+\gamma^2) \,  \int_{\Omega} \eta^2  (h^2(x)+|g(x)|^2)(1 + |Du|^2)^{\frac{2q -p}{2}+\gamma }  \, dx\cr\cr
&&+C_\varepsilon \int_{\Omega} |D \eta|^2 \,  (1 + |Du|^2)^{\frac{q}{2}+\gamma}  \, dx.
\end{eqnarray*}
Using assumption (F2) in the left hand side of previous estimate, we obtain
\begin{eqnarray*}
&& \frac{\tilde \nu}{2}\int_{\Omega} \eta^2 (1 + |Du|^2)^{\frac{p-2}{2}+\gamma}  |D^2 u|^2 \, dx \, dx\cr\cr
 &\le &3\varepsilon \int_{\Omega} \eta^2 (1 + |Du|^2)^{\frac{p-2}{2}+\gamma}  |D^2 u|^2 \, dx \cr\cr
&&+ C_\varepsilon (1+\gamma^2) \, \int_{\Omega} \eta^2  (h^2(x)+|g(x)|^2)(1 + |Du|^2)^{\frac{2q -p}{2}+\gamma }  \, dx\cr\cr
&&+C_\varepsilon \int_{\Omega} |D \eta|^2 \,  (1 + |Du|^2)^{\frac{q}{2}+\gamma}  \, dx
\end{eqnarray*}
Choosing $\varepsilon=\frac{\tilde \nu}{12}$, we can reabsorb the first integral in the right hand side by the left hand side thus getting
\begin{eqnarray*}
&& \int_{\Omega} \eta^2 (1 + |Du|^2)^{\frac{p-2}{2}+\gamma}  |D^2 u|^2 \, dx \, dx\cr\cr
 &\le & C (1+\gamma^2)  \, \int_{\Omega} \eta^2  (h^2(x)+|g(x)|^2)(1 + |Du|^2)^{\frac{2q -p}{2}+\gamma }  \, dx\cr\cr
&&+C \int_{\Omega} |D \eta|^2 \,  (1 + |Du|^2)^{\frac{q}{2}+\gamma}  \, dx\cr\cr
&\le & C (1+\gamma^2)  \, \int_{\Omega} (\eta^2 +|D \eta|^2 ) (1+h^2(x)+|g(x)|^2)(1 + |Du|^2)^{\frac{2q -p}{2}+\gamma }  \, dx.
\end{eqnarray*}
Using the assumptions on $h$ and $g$,
 the properties of $\eta$ and H\"older's inequality, we arrive at
\begin{eqnarray}\label{pinca}
\int_{B_{t_2}} \eta^2 (1 + |Du|^2)^{\frac{p-2}{2} + \gamma}  |D^2 u|^2 \, dx \le  C  (1 + \gamma^2)\frac{\Theta}{(t_2 - t_1)^{2}} \left [\int_{B_{t_2}}(1 + |Du|^2)^{\frac{(2\gamma + 2q - p)m}{2}}  \, dx \right
]^{\frac{1}{m}}
\end{eqnarray}
for any  $0 < t_1 < t_2$, where the constant $C$  is independent of $\gamma$, and where we set
\[
\Theta = (1 + \|g\|^2_{L^r(\Omega)} + \|h\|^2_{L^r(\Omega)})
\]
and $$m=\frac{r}{r-2}.$$
The Sobolev  inequality yields
\begin{eqnarray*}
&&\left ( \int_{B_{t_2}}  \eta^{2^*}(1 + |Du|^2)^{(\frac{p}{4} + \frac{\gamma}{2}) 2^*}\, dx\right )^{\frac{2}{2^*}} \le \, C \, \int_{B_{t_2}} |D (\eta(1 + |Du|^2)^{\frac{p}{4} + \frac{\gamma}{2}})|^2 \, dx 	\cr\cr
&\le&C(1+\gamma^2)\int_{B_{t_2}} \eta^2 (1 + |Du|^2)^{\frac{p-2}{2} + \gamma}  |D^2 u|^2\,dx+C \int_{B_{t_2}} |D \eta|^2 \,  (1 + |Du|^2)^{\frac{p}{2}+\gamma}  \, dx,
\end{eqnarray*}
where we set
$$ 2^*=\begin{cases}
	\displaystyle{\frac{2n}{n-2}}\qquad\qquad\qquad\qquad\qquad \text{if}\,\,n\ge3\cr\cr
	\text{any finite exponent}\, \qquad\qquad \text{if}\,\,n=2.
	\end{cases}$$
	Using estimate \eqref{pinca} to control the first integral in the right hand side of previous inequality, we obtain
\begin{eqnarray}\label{stimapreite}
&&\left ( \int_{B_{t_2}}  \eta^{2^*}(1 + |Du|^2)^{(\frac{p}{4} + \frac{\gamma}{2}) 2^*}\, dx\right )^{\frac{2}{2^*}} 	\cr\cr
&\le& C\frac{\Theta(1+\gamma^4)}{(t_2 - t_1)^{2}}\left [\int_{B_{t_2}}(1 + |Du|^2)^{\frac{(2\gamma + 2q - p)m}{2}}  \, dx \right ]^{\frac{1}{m}}\cr\cr
&&+\frac{C}{(t_2 - t_1)^{2}} \int_{B_{t_2}}  \,  (1 + |Du|^2)^{\frac{p}{2}+\gamma}  \, dx\cr\cr
&\le& C\frac{\Theta(1+\gamma^4)}{(t_2 - t_1)^{2}}\left [\int_{B_{t_2}}(1 + |Du|^2)^{\frac{(2\gamma + 2q - p)m}{2}}  \, dx \right ]^{\frac{1}{m}},
\end{eqnarray}	
where we used that $p\le 2q-p$ and that $L^m\hookrightarrow L^1$.
\\
By virtue of Theorem \ref{Chiara} and by Sobolev embedding we have that $Du\in L^{\frac{np}{n-2}}_{\mathrm{loc}}(\Omega)$. However, in the sequel, we need a quantitative estimate of the $ L^{\frac{np}{n-2}}$ norm of $Du$ that
takes into account the explicit dependencies on the structural parameters.
\\
{To this aim,  we remark that for $\gamma=0$ previous estimate reads as
\begin{eqnarray}\label{stimapreite0}
&&\left ( \int_{B_{t_2}}  \eta^{2^*}(1 + |Du|^2)^{\frac{p}{2} \cdot \frac{2^*}{2}}\, dx\right )^{\frac{2}{2^*}} 	\cr\cr
&\le& C\frac{\Theta}{(t_2-t_1)^{2}}\left [\int_{B_{t_2}}(1 + |Du|^2)^{\frac{( 2q - p)m}{2}}  \, dx \right ]^{\frac{1}{m}}
\end{eqnarray}
and, since {by virtue of the assumption \eqref{gap}
\begin{equation}
\label{cons-app-2}
(2q-p)m<p\frac{2^*}{2}
\end{equation}
(see Section \ref{appendix})}, there exists $\vartheta\in (0,1)$ such that
\begin{equation}\label{vartheta}
\frac{1}{( 2q-p) m} = \frac{\vartheta}{ p} + \frac{1 - \vartheta}{ \frac{2^* p}{2}}.
\end{equation}
Actually, one can easily check that
\[
\vartheta = 1-\frac{n}{2}\left(1-\frac{p}{(2q-p)m}\right)
\]
and
\begin{equation}
\label{def-theta-2}
1 - \vartheta = \frac{n}{2}\left(1-\frac{p}{(2q-p)m}\right).
\end{equation}
So we can use the following interpolation inequality
\begin{equation}
\label{interpolation-gamma}
\|(1+|Du|^2)^{\frac{1}{2}}\|_{L^{( 2q-p) m}(B_{t_2})} \le \, \|(1+|Du|^2)^{\frac{1}{2}}\|^{\vartheta}_{L^{ p}(B_{t_2})} \, \|(1+|Du|^2)^{\frac{1}{2}}\|^{1 - \vartheta}_{L^{\frac{2^*p}{2}}(B_{t_2})}
\end{equation}
to   estimate the first integral on the right hand side of\eqref{stimapreite0}, thus  obtaining
\begin{eqnarray}\label{preite}
&& \left ( \int_{B_{t_1}}  (1 + |Du|^2)^{\frac{p  n}{2 (n-2)}}\, dx \right )^{\frac{n-2}{n}} \cr\cr
&\le &\, C   \, \frac{\Theta}{(t_2 - t_1)^{2}} \left [\int_{B_{t_2}}  \,  (1 + |Du|^2)^{\frac{p  n}{2 (n-2)}} \, dx \right ]^{\frac{( 2 q - p)}{\frac{2^*p}{2}} (1 - \vartheta)}\cr\cr
&& \times \left [\int_{B_{t_2}}  \, (1 + |Du|^2)^{\frac{ p}{2}}  \, dx \right ]^{\frac{( 2 q - p)}{ p} \vartheta}.
\end{eqnarray}
Observe that
\[
\frac{2q -p }{ p} (1 - \vartheta)={\frac{n}{2}\left(\frac{ 2q -p }{ p}-\frac{1}{m}\right)} < 1.
\]
Indeed
$$\frac{n}{2}\left(\frac{2q -p }{ p}-\frac{1}{m}\right)<1\,\,\Longleftrightarrow\,\, \frac{2q }{ p}< 1+\frac{1}{m}+\frac{2}{n}=2+\frac{1}{m}-\frac{2}{2^*},$$
which is equivalent to \eqref{gap}.
Therefore, we are legitimate to Young's inequality  {with exponents $\frac{p}{(2q-p)(1-\vartheta)}$ and $\frac{p}{p-(2q-p)(1-\vartheta)}$ in the right hand side of \eqref{preite}} thus getting
\begin{eqnarray*}
&& \left ( \int_{B_{t_1}}  (1 + |Du|^2)^{\frac{pn}{2 (n-2)}}\, dx \right )^{\frac{n-2}{n}}\le  \varepsilon  \left [\int_{B_{t_2}}  \,  (1 + |Du|^2)^{\frac{p n}{2 (n-2)}} \, dx \right ]^{\frac{n - 2}{n}}\cr\cr
&& +C \, \left( \frac{\Theta }{(t_2 - t_1)^2}\right)^{\frac{p}{p-(2q-p)(1-\vartheta)}}
\left [\int_{B_{t_2}}  \, (1 + |Du|^2)^{\frac{p}{2}}  \, dx \right ]^{\frac{( 2 q - p)\vartheta}{( 2 q - p)\vartheta-2(q-p)}}.
\end{eqnarray*}
Since previous estimate holds true for every $\rho<t_1<t_2<R$, Lemma \ref{holf}
implies that
\begin{eqnarray*}
 \left ( \int_{B_{t_1}}  (1 + |Du|^2)^{\frac{pn}{2 (n-2)}}\, dx \right )^{\frac{n-2}{n}}\le  C \, \left( \frac{\Theta }{(t_2 - t_1)^2}\right)^{\frac{p}{p-(2q-p)(1-\vartheta)}}
\left [\int_{B_{t_2}}  \, (1 + |Du|^2)^{\frac{p}{2}}  \, dx \right ]^{\frac{( 2 q - p)\vartheta}{( 2 q - p)\vartheta-2(q-p)}}
\end{eqnarray*}
 and, since $r>n$, also that
\begin{equation}\label{primopasso}
	 \left ( \int_{B_{\rho}}  (1 + |Du|^2)^{\frac{pm}{2}}\, dx \right )^{\frac{1}{m}}\le  C \, \left( \frac{\Theta }{(R - \rho)^2}\right)^{\frac{p}{p-(2q-p)(1-\vartheta)}}
\left [\int_{B_R}  \, (1 + |Du|^2)^{\frac{p}{2}}  \, dx \right ]^{\frac{( 2 q - p)\vartheta}{( 2 q - p)\vartheta-2(q-p)}}.
\end{equation}
Now, setting $$V(Du)=(1+|Du|^2)^{\frac{1}{2}},$$ we can write \eqref{stimapreite} as follows
\begin{eqnarray}\label{stimapreitebis}
&&\left ( \int_{B_{R}}  \eta^{2^*}V(Du)^{(p+2\gamma)\frac{2^*}{2}}\, dx\right )^{\frac{2}{2^*}} 	\cr\cr
&\le& C\frac{\Theta (1+\gamma^4)}{(R - \rho)^{2}}\left [\int_{B_R}V(Du)^{[ 2(q - p)m+(p+2\gamma)m]}  \, dx \right ]^{\frac{1}{m}} \cr\cr
&\le& C\frac{\Theta (1+\gamma^4)}{(R - \rho)^{2}}||V(Du)||^{2(q-p)}_{L^\infty(B_R)}\left [\int_{B_R}V(Du)^{(p+2\gamma)m}  \, dx \right ]^{\frac{1}{m}}
\end{eqnarray}
and so
\begin{eqnarray}\label{stimapreiteter}
&&\left ( \int_{B_{\rho}}  V(Du)^{[m(p+2\gamma)]\frac{2^*}{2m}}\, dx\right )^{\frac{2m}{2^*}} 	\cr\cr
&\le& C {\left [\frac{\Theta (1+\gamma^4)}{(R - \rho)^{2}} \right ]^m} ||V(Du)||^{2m(q-p)}_{L^\infty(B_R)}\int_{B_R}V(Du)^{(p+2\gamma)m}  \, dx ,
\end{eqnarray}	
where we also used that $\eta=1$ on $B_\rho$.
\\
{With radii $\frac{R_0}{2}\le \bar \rho<\bar R\le R_0$ fixed at the beginning of the section, we} define the decreasing sequence of radii by setting
$$\rho_i=\bar \rho+\frac{\bar R-\bar \rho}{2^i}.$$
Let us also define the following increasing sequence of exponents
$$p_0=pm\qquad\qquad {p_{i}}={p_{i-1}}\frac{2^*}{2m}=p_0\left(\frac{2^*}{2m}\right)^{i}$$
Noticing that, by virtue of \eqref{primopasso}, estimate \eqref{stimapreiteter} holds true for $\gamma=0$ and for every $\bar \rho<\rho<R<\bar R$, we may iterate it  on the concentric balls $B_{\rho_i}$  with exponents $p_i$,
thus obtaining
\begin{eqnarray}\label{stimapreitepassoi}
&&\left ( \int_{B_{\rho_{i+1}}}  V(Du)^{p_{i+1}}\, dx\right )^{\frac{1}{p_{i+1}}} 	\cr\cr
&\le& \displaystyle{\prod_{j=0}^{i}}\left(C {\frac{\Theta^m p_j^{4m}}{(\rho_j - \rho_{j+1})^{2m}}}||V(Du)||^{2m(q-p)}_{L^\infty(B_R)}\right)^{\frac{1}{p_j}}\left(\int_{B_{\rho_0}}V(Du)^{p_0}  \,
dx\right)^{\frac{1}{p_0}}\cr\cr
&=& \displaystyle{\prod_{j=0}^{i}}\left(C{\frac{4^{j+1}\Theta^m p_j^{4m}}{(\bar R-\bar \rho )^{2m}}}||V(Du)||^{2m(q-p)}_{L^\infty(B_R)}\right)^{\frac{1}{p_j}}\left(\int_{B_{\rho_0}}V(Du)^{p_0}  \,
dx\right)^{\frac{1}{p_0}}\cr\cr
&=& \displaystyle{\prod_{j=0}^{i}}\left(4^{j+1} {p_j^{4m}} \right)^{\frac{1}{p_j}}\displaystyle{\prod_{j=0}^{i}}\left({\frac{C\Theta^m }{(\bar R-\bar \rho
)^{2m}}}||V(Du)||^{2m(q-p)}_{L^\infty(B_R)}\right)^{\frac{1}{p_j}}\left(\int_{B_{\rho_0}}V(Du)^{p_0}  \, dx\right)^{\frac{1}{p_0}}.
\end{eqnarray}
Since
$$\displaystyle{\prod_{j=0}^{i}}\left(4^{j+1} {p_j^{4m}}\right)^{\frac{1}{p_j}}=\exp\left(\sum_{j=0}^i \frac{1}{p_j}\log(4^{j+1} {p_j^{4m}})\right)\le \exp\left(\sum_{j=0}^{+\infty} \frac{1}{p_j}\log({4}^{j+1}
{p_j^{4m}})\right) \le c(m)$$
and
\begin{eqnarray*}
	&&\displaystyle{\prod_{j=0}^{i}}\left(\frac{C {\Theta^m} }{{(\bar R-\bar \rho )^{2m}}}||V(Du)||^{2m(q-p)}_{L^\infty(B_R)}\right)^{\frac{1}{p_j}}=\left(\frac{C {\Theta^m} }{{(\bar R-\bar \rho
)^{2m}}}||V(Du)||^{2m(q-p)}_{L^\infty(B_R)}\right)^{\sum_{j=0}^{i}\frac{1}{p_j}}
	\cr\cr
	&\le& \left(\frac{C {\Theta^m} }{{(\bar R-\bar \rho )^{2m}}}||V(Du)||^{2m(q-p)}_{L^\infty(B_R)}\right)^{\sum_{j=0}^{+\infty}\frac{1}{p_j}}=\left(\frac{C {\Theta^m} }{{(\bar R-\bar \rho
)^{2m}}}||V(Du)||^{2m(q-p)}_{L^\infty(B_R)}\right)^{\frac{n(r-2)}{2p_0(r-n)}}
\end{eqnarray*}
we can let $i\to \infty$ in \eqref{stimapreitepassoi} thus getting
\begin{eqnarray}\label{stimaquasifin}
	||V(Du)||_{L^{\infty}(B_{\bar\rho})} \le C \left(\frac{\Theta }{(\bar R-\bar \rho )^{2}}\right)^{{\frac{n(r-2)}{2p(r-n)}}}||V(Du)||_{L^{\infty}(B_{\bar R})}^{\frac{n(q-p)(r-2)}{p(r-n)}}\left(\int_{B_{\bar R}}V(Du)^{p_0}
\, dx\right)^{\frac{1}{p_0}},
\end{eqnarray}
where we used that  $p_0=pm$.  Now we have that
 $$\frac{n(q-p)(r-2)}{p(r-n)}<1\,\,\Longleftrightarrow\,\,\frac{q-p}{p}< \frac{r-n}{n(r-2)}$$
{which holds because, assumption \eqref{gap} implies that} $$\frac{q-p}{p}< \frac{1}{n}-\frac{1}{r}=\frac{r-n}{rn}<\frac{r-n}{n(r-2)}$$
 Therefore, by the use of Young's inequality  with exponents $\frac{p(r-n)}{n(q-p)(r-2)}$ and $\frac{p(r-n)}{p(r-n)-n(q-p)(r-2)}$ in the right hand side of \eqref{stimaquasifin}, we get
 \begin{eqnarray}\label{stimaquasifin00}
	||V(Du)||_{L^{\infty}(B_{\bar\rho})} \le \frac{1}{2} ||V(Du)||_{L^{\infty}(B_{\bar R})}+\left(\frac{C\Theta }{(\bar R-\bar \rho )^{2}}\right)^\sigma\left(\int_{B_{\bar R}}V(Du)^{pm}  \, dx\right)^{\frac{\sigma}{pm}},
\end{eqnarray}
for an exponent ${\sigma=\sigma(n,r,p,q)}>0$. Since previous estimate holds true for every $\frac{R_0}{2}<\bar\rho<\bar R<R_0$, by Lemma \ref{holf} we get\begin{eqnarray}\label{stimaquasifin3}
	||V(Du)||_{L^{\infty}\left(B_{\frac{R_0}{2}}\right)} \le \left(\frac{C\Theta }{R_0^{2}}\right)^\sigma\left(\int_{B_{ R_0}}V(Du)^{pm}  \, dx\right)^{\frac{\sigma}{pm}}.
\end{eqnarray}
Estimate \eqref{stimafinale}} follows combining \eqref{stimaquasifin3} with \eqref{primopasso}.

\begin{remark}
It is worth noticing that in the case $p = q$ we retrieve the following result
\begin{corollary}
\label{corollario-secondo}
Let $u\in \mathcal{K}_\psi(\Omega) \cap W^{1,\infty}_{\mathrm{loc}}(\Omega)$ be a solution to {\eqref{obst-def0}} under the assumptions \textnormal{(F1)--(F3)} with $p = q$ and $h \in L^{\infty}_{\rm loc}(\Omega)$.
 If $\psi\in W^{2,r}_{\mathrm{loc}}(\Omega)$, for some $r > n$, then the following a priori estimate
 \begin{eqnarray*}
 	\sup_{B_{R/2}}|Du|  \le \left(\frac{C }{R^{2}}\right)^\sigma\left(\int_{B_{ R}}(1+{|Du|})^{p}  \, dx\right)^{\frac{\sigma}{p}}
 \end{eqnarray*}
 holds for every ball $B_R\Subset\Omega$, for $\sigma(n, r, p)>0$ and with $C=C(||h||_{\infty}, ||D^2\psi||_r,\tilde\nu_1, \tilde L_1,p,n)$ but independent of $||Du||_{L^\infty}$.		
\end{corollary}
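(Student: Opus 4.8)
The plan is to derive Corollary \ref{corollario-secondo} by running \textbf{Step 1} of the proof of Theorem \ref{loclip} in the special case $q=p$ and recording the simplifications that occur in this range. First I would collect the preliminary facts that make the machinery of Step 1 available: when $q=p$ the gap \eqref{gap} holds trivially, since $p(1+\frac1n-\frac1r)>p=q$, and the parameter $\chi=\max\{r,2q-p\}$ collapses to $r$; hence Corollary \ref{corollario-primo} yields the linearized equation $\mathrm{div}\,\mathcal{A}(x,Du)=g$ with $g\in L^r_{\mathrm{loc}}(\Omega)$, and Theorem \ref{Chiara} gives $(1+|Du|^2)^{\frac{p-2}{4}}Du\in W^{1,2}_{\mathrm{loc}}(\Omega)$, so that together with the standing assumption $u\in W^{1,\infty}_{\mathrm{loc}}(\Omega)$ the second variation \eqref{second_variation} and all the integrations by parts of Step 1 are licit. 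I would also note, from the pointwise bound $|g|\le h(x)(1+|D\psi|^2)^{\frac{q-1}{2}}+\tilde L_1(1+|D\psi|^2)^{\frac{q-2}{2}}|D^2\psi|$, the embedding $W^{2,r}_{\mathrm{loc}}\hookrightarrow W^{1,\infty}_{\mathrm{loc}}$ (valid since $r>n$), and $h\in L^\infty_{\mathrm{loc}}$, that on every ball $B_{R_0}\Subset\Omega$ the quantity $\Theta:=1+\|g\|^2_{L^r(B_{R_0})}+\|h\|^2_{L^r(B_{R_0})}$ is bounded by $C(\|h\|_\infty,\|D^2\psi\|_r)$; this is what produces the asserted dependence of the final constant.

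Next I would reproduce the Caccioppoli-type inequality. Testing \eqref{second_variation} with $\varphi=\eta^2(1+|Du|^2)^\gamma u_{x_s}$, summing in $s$ and estimating $I_1,\dots,I_9$ exactly as in the proof of Theorem \ref{loclip} — every occurrence of $2q-p$ now being equal to $p$ — one reaches the analogue of \eqref{pinca}, i.e. for all $\gamma\ge0$ a bound of $\int\eta^2(1+|Du|^2)^{\frac{p-2}{2}+\gamma}|D^2u|^2$ by $C(1+\gamma^2)\frac{\Theta}{(t_2-t_1)^2}\big[\int(1+|Du|^2)^{\frac{(2\gamma+p)m}{2}}\big]^{1/m}$, with $m=\frac{r}{r-2}$ and $C$ independent of $\gamma$. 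Coupling this with the Sobolev inequality applied to $\eta(1+|Du|^2)^{\frac p4+\frac\gamma2}$ gives the analogue of \eqref{stimapreiteter}; here is the only place where the special structure matters, because the exponent $2m(q-p)$ of the factor $\|V(Du)\|_{L^\infty(B_R)}$, with $V(Du)=(1+|Du|^2)^{1/2}$, is now zero, so that factor disappears and \emph{no $L^\infty$ norm of $Du$ ever enters the iteration}. For this reason I expect no genuine obstacle: the mechanism that in the true $(p,q)$-case requires the gap precisely in order to reabsorb $\|V(Du)\|^{2m(q-p)}_{L^\infty}$ at the end (cf. \eqref{stimaquasifin}--\eqref{stimaquasifin00}) degenerates, when $q=p$, into a plain Moser iteration.

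Finally I would carry out the iteration. For $\gamma=0$ the condition $(2q-p)m<p\frac{2^*}{2}$ needed to start reduces to $m<\frac{2^*}{2}$, i.e. to $r>n$, so the interpolation \eqref{interpolation-gamma} (with $2q-p$ replaced by $p$) together with Young's inequality — admissible because $\frac{2q-p}{p}(1-\vartheta)=1-\vartheta<1$ — and the hole-filling Lemma \ref{holf} give the analogue of \eqref{primopasso}, a reverse-H\"older-type bound $\big(\int_{B_\rho}V(Du)^{pm}\big)^{1/m}\le C\big(\Theta/(R-\rho)^2\big)^{1/\vartheta}\int_{B_R}V(Du)^{p}$ for $\frac{R_0}{2}<\rho<R<R_0$. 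I would then iterate the $\gamma$-estimate over the radii $\rho_i=\bar\rho+(\bar R-\bar\rho)2^{-i}$ and the exponents $p_i=pm\,(2^*/(2m))^i$, using that $\prod_j(4^{j+1}p_j^{4m})^{1/p_j}$ and $\sum_j 1/p_j$ are finite; because the $\|V(Du)\|_{L^\infty}$ factor is absent, passing to the limit $i\to\infty$ directly yields $\|V(Du)\|_{L^\infty(B_{\bar\rho})}\le C(\Theta/(\bar R-\bar\rho)^2)^{\frac{n(r-2)}{2p(r-n)}}\big(\int_{B_{\bar R}}V(Du)^{pm}\big)^{1/(pm)}$, with no concluding reabsorption needed. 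Choosing $\bar\rho=\frac{R_0}{2}$, $\bar R=R_0$, using the $\gamma=0$ bound to replace the $L^{pm}$ norm on the right by the $L^p$ norm, and inserting the estimate for $\Theta$, I obtain $\sup_{B_{R_0/2}}|Du|\le(C/R_0^2)^\sigma\big(\int_{B_{R_0}}(1+|Du|)^p\big)^{\sigma/p}$ for some $\sigma=\sigma(n,r,p)>0$ and $C=C(\|h\|_\infty,\|D^2\psi\|_r,\tilde\nu_1,\tilde L_1,p,n)$, manifestly independent of $\|Du\|_{L^\infty}$. The only care required is the bookkeeping — checking that the Caccioppoli constants stay independent of $\gamma$ so that the infinite products converge — which is already settled in the corresponding passage of the proof of Theorem \ref{loclip}.
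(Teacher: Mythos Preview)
Your proposal is correct and follows essentially the same approach as the paper: the corollary is stated in the paper as an immediate specialization of Step~1 of the proof of Theorem~\ref{loclip} to the case $p=q$, and you have simply written out the resulting simplifications --- in particular that $\chi=r$, that the factor $\|V(Du)\|_{L^\infty}^{2m(q-p)}$ in \eqref{stimapreiteter} disappears so the Moser iteration closes without any reabsorption via Lemma~\ref{holf} at the level of \eqref{stimaquasifin00}, and that the starting condition $(2q-p)m<p\,2^*/2$ reduces to $r>n$. The bookkeeping and constant dependencies are handled correctly.
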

\end{remark}

\medskip

{\bf Step 2. Conclusion.}\quad Now we conclude the proof by passing to the limit in the approximating problem. The limit procedure is standard { see for example (\cite{CGGP})} and we just give here a brief sketch.
\\
{Let $u \in \mathcal{K}_{\psi}(\Omega)$ be a solution to \eqref{obst-def0} and } let ${F^\varepsilon}$ be the sequence obtained applying Lemma \ref{apprcupguimas2parte}  to the integrand $F$. Let us fix a ball
$B_R\Subset\Omega$ and let ${u^\varepsilon}\in u+W^{1,p}_0(B_R)$ be the solution to the minimization problem
$$\min\left\{\int_{B_R} {F^\varepsilon}(x,Dz):\,\, z\in \mathcal{K}_{\psi}(B_R)\right\}.$$
By Theorem \ref{CarloMichelebis}, the minimizers ${u^\varepsilon}$ satisfy the  a priori assumptions at \eqref{regE}, i.e. ${u^\varepsilon}\in W^{1,\infty}_{\mathrm{loc}}(\Omega)$, and therefore we are legitimate to use
estimate \eqref{stimafinale} thus obtaining
 \begin{eqnarray}\label{stimaquasifin33}
	||V(D{u^\varepsilon})||_{L^{\infty}\left(B_{\frac{R_0}{2}}\right)} \le \left(\frac{C\Theta }{R_0^{2}}\right)^\sigma\left(\int_{B_{ R_0}}V(D{u^\varepsilon})^{p}  \, dx\right)^{\frac{\sigma}{p}}.
\end{eqnarray}
 By the first inequality of growth conditions  at (III) of Lemma \ref{apprcupguimas2parte} and the minimality of $u_\varepsilon$ we get
\begin{eqnarray*}
\int_{B_R} |D{u^\varepsilon}(x)|^p\,dx &\le& {C(K_0)}\int_{B_R} {F^\varepsilon}(x,D {u^\varepsilon}(x))\,dx \cr\cr
&\le& {C(K_0)} \int_{B_R} {F^\varepsilon}(x,Du(x))\,dx\cr\cr
&\le& {C(K_0)}\int_{B_R} F(x,Du(x)) \, dx \,,
\end{eqnarray*}
where in the last estimate we used the second inequality at (I) of Lemma \ref{apprcupguimas2parte}.
Since $F(x, Du)\in L^1_{\mathrm{loc}}(\Omega)$ by assumption,   we deduce, up to subsequences, that there exists $\bar u\in W^{1,p}_0(B_R)+u$ such that $${u^\varepsilon}\rightharpoonup\bar{u}\quad\quad\text{weakly in
}W^{1,p}_0(B_R)+u\,.$$ Note that, since $u^\varepsilon\in \mathcal{K}_\psi$ for every $\varepsilon$ and $\mathcal{K}_\psi$ is a closed set, we have $\bar u\in \mathcal{K}_\psi$. Our next aim is to show that $\bar u$ is a
solution to our obstacle problem over the ball $B_R$.
To this aim fix $\varepsilon_0>0$ and observe that the lower semicontinuity of the functional $w\mapsto \int_{B_R}F^{\varepsilon_0}(x,Dw)\,dx$, the minimality of $u^\varepsilon$ and the monotonicity of the sequence of
$F^\varepsilon$ yield
\begin{eqnarray*}
\int_{B_R} {F^{\varepsilon_0}}(x,D\bar{u})\,dx\le \lim_{\varepsilon\to 0}\int_{B_R} F^{\varepsilon_0}(x,Du^{\varepsilon})\,dx	\le
\int_{B_R} F^{\varepsilon_0}(x,Du)\,dx\le \int_{B_R} F(x,Du)\,dx
\end{eqnarray*}
We now use monotone convergence Theorem in the left hand side of previous estimate to deduce that
$$\int_{B_R} F(x,D\bar{u})\,dx=\lim_{\varepsilon_0\to 0}\int_{B_R} {F^{\varepsilon_0}}(x,D\bar{u})\,dx\le \int_{B_R} F(x,Du)\,dx$$
 We have then proved that the limit function $\bar{u}\in W^{1,p}(B_R)+u$ is a solution to the minimization problem
$$\min\left\{\int_\Omega F(x,Dw):w\in W^{1,p}_0(B_R)+u,\,w\in \mathcal{K}_{\psi}\right\}.$$
Since by the strict convexity of the functional the solution is unique, we conclude that $u=\bar u$.
It is quite routine to show that the convergence of ${u^\varepsilon}$ to $u$ is strong in $W^{1,p}_{\mathrm{loc}}(B_R)$ and hence the conclusion follows passing to the limit as $\varepsilon\to 0$ in estimate
\eqref{stimaquasifin33}.
\end{proof}

\section{Appendix.}

\subsection {Some consequences of assumption \eqref{gap}}\label{appendix}

{In this Section we show that our assumption \eqref{gap} implies two facts: first of all that {if we assume that $q < n$ than it turns out that} $2 q - p < r$;
 second {we show} \eqref{cons-app-2}, which instead have been used in performing the a priori estimate, both related to our Lipschitz regularity result.
\\
\\
First let us prove that $2q - p < r$. Indeed \eqref{gap} is equivalent to
\[
r > \frac{1}{\displaystyle 1 + \frac{1}{n} - \frac{q}{p}}.
\]
If we are able to show that
\begin{equation}
\label{claim-app}
2q - p < \frac{1}{\displaystyle 1 + \frac{1}{n} - \frac{q}{p}},
\end{equation}
then our result follows.
\\
\\
Let us prove \eqref{claim-app}. We have
\begin{eqnarray*}
2q - p < \frac{1}{\displaystyle 1 + \frac{1}{n} - \frac{q}{p}} &\Longleftrightarrow& \frac{np}{np + p - qn} > 2q - p  \Longleftrightarrow np > (2q - p)(np + p - qn) \\
&\Longleftrightarrow& np + (q-p)^2 n + q n (q - p) > p (q - p) + q p
\end{eqnarray*}
which holds true as long as $qn(q-p) > p (q - p)$ and $np > pq$, as long as we assumed $q < n$.
\\
\\
On the other hand let us now prove \eqref{cons-app-2}.  Indeed  \eqref{gap} entails that
\[
2q - p \le \, 2 p \left (1 + \frac{1}{2m} - \frac{1}{2^*} \right ) - p = p \left (1 + \frac{1}{m} - \frac{2}{2^*} \right )
\]
so that
\[
(2q - p) m \le \, p m \left (1 + \frac{1}{m} - \frac{2}{2^*} \right )
\]
and it turns out that
\[
p m \left (1 + \frac{1}{m} - \frac{2}{2^*} \right ) \le \, p \frac{2^*}{2}  \Longleftrightarrow 1 + \frac{1}{m} - \frac{2}{2^*} \le \, \frac{2^*}{2m} \Longleftrightarrow 1 + \frac{1}{m} - \frac{2}{2^*} - \frac{2^*}{2m} \le \,
0
\]
but
\[
\frac{2^*}{2m} > 1 \Rightarrow \frac{2^*}{2m} = 1 + \varepsilon
\]
{for some $\varepsilon > 0$}, therefore
\[
1 + \frac{1}{m} - \frac{2}{2^*} - \frac{2^*}{2m} = 1 + \frac{2}{2^*} + \frac{2}{2^*} \varepsilon - \frac{2}{2^*} - 1 - \varepsilon = \left ( \frac{2}{2^*} - 1\right ) \varepsilon  \le 0
\]
which is what we would like to prove.}

\subsection{{On} the validity of the variational inequality \eqref{obst-def}.}\label{EL}
Let $u\in \mathcal{K}_\psi(\Omega)$ be a solution to {\eqref{obst-def0}} under the assumptions \textnormal{(F1)--(F3)}. Suppose that there exists $r>n$ such that $h\in L^r_{\mathrm{loc}}(\Omega)$, where $h(x)$ is the function
appearing in \textnormal{(F3)}. Assume moreover that  $2\le p\le q$ are such that \eqref{gap} is satisfied.\\
If $\psi\in W^{2,\chi}_{\mathrm{loc}}(\Omega)$ with $\chi\ge 2q-p$,
 by Theorem \ref{Chiara} and the Sobolev imbedding Theorem we have that $Du\in L^{\frac{np}{n-2}}_{\mathrm{loc}}(\Omega)$.
 Since $u$ is a solution to \eqref{obst-def} then
 $$0\le \int_\Omega \Big[F(x, Du+\varepsilon Dv)-F(x, Du)\Big]\,dx,$$
 for every $v\in W^{1,q}_0(\Omega)$, $v\ge 0$ and for every $\varepsilon>0$.
 Therefore,  we have
 \begin{eqnarray*}
 0&\le& \varepsilon\int_\Omega \int_0^1\langle {F_{\xi}}(x, Du+s\varepsilon Dv),Dv\rangle\,ds\,dx\cr\cr
 &=&\varepsilon\int_\Omega \langle {F_{\xi}} (x, Du),Dv\rangle\,dx+\varepsilon\int_\Omega \int_0^1\langle {F_{\xi}}(x, Du+s\varepsilon Dv)- {F_{\xi}}(x, Du),Dv\,ds\rangle\,dx\cr\cr
 &=&\varepsilon\int_\Omega \langle {F_{\xi}}(x, Du),Dv\rangle\,dx+\varepsilon^2\int_\Omega \int_0^1s\int_0^1\langle {F_{\xi \xi}}(x, Du+st\varepsilon Dv)Dv,Dv\rangle\,dt\,ds\,dx	
 \end{eqnarray*}
 Dividing both side of previous inequality by $\varepsilon$ we have
 \begin{eqnarray}\label{EL1}
 0&\le&
\int_\Omega \langle {F_{\xi}}(x, Du),Dv\rangle\,dx+\varepsilon\int_\Omega \int_0^1s\int_0^1\langle {F_{\xi \xi}}(x, Du+st\varepsilon Dv)Dv,Dv\rangle \,dt\,ds\,dx	\cr\cr
&=:& I+II
 \end{eqnarray}
We observe that
\begin{eqnarray*}
	II&\le& \varepsilon \int_\Omega \int_0^1s\int_0^1| {F_{\xi \xi}}(x, Du+st\varepsilon Dv)|Dv|^2 \,dt\,ds\,dx\cr\cr
	&\le& \varepsilon \int_\Omega \int_0^1s\int_0^1(1+|Du+st\varepsilon Dv|^2)^{\frac{q-2}{2}}|Dv|^2 \,dt\,ds\,dx\cr\cr
	&\le& \varepsilon \int_\Omega (1+|Du|^2+\varepsilon^2| Dv|^2)^{\frac{q-2}{2}}|Dv|^2 \,dx	
\end{eqnarray*}
Since $$q<p\left(1+\frac{1}{n}-\frac{1}{r}\right)<p\frac{n}{n-2}$$ and $$Du\in L^{\frac{np}{n-2}}_{\mathrm{loc}}(\Omega)\hookrightarrow L^q_{\mathrm{loc}}(\Omega)$$
we have that $$\lim_{\varepsilon\to 0}II=0$$
and passing to the limit as $\varepsilon\to 0$ in \eqref{EL1}, we have
$$\int_\Omega \langle {F_{\xi}}(x, Du),Dv\rangle\,dx\ge 0,$$
for every $v\in W^{1,q}_0(\Omega)$, $v\ge 0$. In particular we may choose $v=\varphi-u$ with $\varphi\in W^{1,q}_{\rm loc}(\Omega)$, $\varphi\ge \psi$ in $\Omega$
thus getting
$$\int_\Omega \langle {F_{\xi}}(x, Du),D\varphi-Du\rangle\,dx\ge 0.$$ This result has been used in Subsection \ref{linear}.
\vspace{10mm}

\end{document}